 \newtheorem{thm}{Theorem}[section]
 \newtheorem{cor}[thm]{Corollary}
 \newtheorem{lem}[thm]{Lemma}
 \newtheorem{prop}[thm]{Proposition}
 \theoremstyle{definition}
 \newtheorem{defn}[thm]{Definition}
 \newtheorem{example}[thm]{Example}
 \theoremstyle{remark}
 \newtheorem{rem}[thm]{Remark}
 \newtheorem{rems}[thm]{Remarks}
 \numberwithin{equation}{section}
 \newcommand{\eps}{\varepsilon}
 \newcommand{\Real}{\mathbb{R}}
 \newcommand{\Complex}{\mathbb{C}}
\begin{document}

\title[On fractional powers of generators]{On fractional powers of generators \\of fractional resolvent families}
\thanks{2000 Mathematics Subject Classification: 47D06, 47A60, 34G10, 26A33.}
\thanks{Keywords: $\alpha$-times resolvent families; $C_0$-semigroups; generators; fractional powers;
subordination principle; fractional Cauchy problems}
\thanks{This project is supported by the NSF of China (No.10971146).}
\date{}
\author{Miao Li}
\address{Department of Mathematics, Sichuan University, Chengdu 610064, P.R.China} \email{mli@scu.edu.cn}
\author{Chuang Chen}
\address{Department of Mathematics, Sichuan University, Chengdu 610064, P.R.China} \email{cc0508036@163.com}
\author{Fu-Bo Li}
\address{Department of Mathematics, Sichuan University, Chengdu 610064, P.R.China} \email{lifubo@scu.edu.cn}
\maketitle

\begin{abstract}


We show that if $-A$ generates a bounded $\alpha$-times resolvent
family for some $\alpha \in (0,2]$, then $-A^{\beta}$ generates an
analytic $\gamma$-times resolvent family for $\beta
\in(0,\frac{2\pi-\pi\gamma}{2\pi-\pi\alpha})$ and $\gamma \in
(0,2)$. And a generalized subordination principle is derived. In
particular, if $-A$ generates a bounded $\alpha$-times resolvent
family for some $\alpha \in (1,2]$, then $-A^{1/\alpha}$ generates
an analytic $C_0$-semigroup. Such relations are applied to study the
solutions of Cauchy problems of fractional order and first order.

\end{abstract}


\section{Introduction}\label{1}
Let $A$ be a closed densely defined linear operator on a Banach
space $X$.  The resolvent families were introduced by Da Prato
\cite{Da} to study Volterra integral equations of the form
\begin{equation}\label{0.1}
u(t) = f(t) + A\int_0^t a(t-s)u(s) ds.
\end{equation}
A family $\{R(t)\}_{t \ge 0} \subset B(X)$ is called a resolvent
family for $A$ with kernel $a$ if

(a) $R(0)=I$ and $R(t)$ is strongly continuous;

(b) $AR(t) \subseteq R(t)A$ for every $t \ge 0$;

(c) for every $x \in D(A)$,
$$
R(t)x = x + \int_0^t a(t-s)R(s)Axds.
$$
It is shown that the problem (\ref{0.1}) is well-posed (in the sense
of \cite{Pru}) if and only if there is a resolvent family for $A$.
Since a $C_0$-semigroup is a resolvent family for its generator with
kernel $a_1(t) \equiv 1$, and a cosine operator function is a
resolvent family for its generator with kernel $a_2(t) = t$, it is
natural to consider the resolvent family with kernel $a_\alpha(t) =
\frac{t^{\alpha-1}}{\Gamma(\alpha)}$. Also note the following facts:
if $A$ generates a $C_0$-semigroup, then the Cauchy problem of first
order
$$
u'(t)= Au(t), \,t \ge 0; \;u(0)=x
$$
is well-posed; and if $A$ generates a cosine operator function, then
the second order Cauchy problem
$$
u''(t)=Au(t),\,t \ge 0; \;u(0)=x, \, u'(0)=y,
$$
is also well-posed. This motivates one to consider the relations
between the existence of resolvent family for $A$ with kernel
$a_\alpha(t) $ and the well-posedness of some kind of fractional
Cauchy problem $ D_t^\alpha u(t)=Au(t) $ with proper initial values.
Such relation was proved by Bajlekova \cite{Baj} in 2001. The
resolvent family for $A$ with kernel $a_\alpha$ was therefore called
$\alpha$-times resolvent family. For more general resolvent families
see \cite{Liz1, Liz2}.

On the other hand, it is well known that if $-A$ generates a bounded
cosine function operator, then $-A^{1/2}$ generates an analytic
$C_0$-semigroup of angle $\pi/2$ (cf. \cite{Ke}). And it was proved
by Yosida in 1960 (cf. \cite{Ka, Yo}) that if  $T$ is a bounded
$C_0$-semigroup on a complex Banach space $X$, with the generator
$A$, then $-A^\alpha,0<\alpha<1$, generates an analytic semigroup
$T_\alpha$ on $X$, and $T_\alpha$ is subordinated to $T$ through the
L\'{e}vy stable density function.

For $\alpha$-times resolvent family, the questions of interest are:

 $(Q_1)$ If $-A$ generates a bounded $C_0$-semigroup, does
 $-A^\alpha$ generate an $\alpha$-times resolvent family?

 $(Q_2)$ If $-A$ generates a bounded $\alpha$-times resolvent family, does
 $-A^{1/\alpha}$ generate a $C_0$-semigroup?

 $(Q_3)$  If $-A$ generates a bounded $\alpha$-times resolvent family,
 does $-A^{1/2}$ generate an $\alpha/2$-times resolvent family?

 $(Q_4)$ If $-A$ generates a bounded $\alpha$-times resolvent family,
 does $-A^\beta$ also generate an $\alpha$-times resolvent family
 for some suitable $\beta$?

 $(Q_5)$ If $-A$ generates a bounded $\alpha$-times resolvent family,
 does $-A^\beta$ generate a $\gamma$-times resolvent family
 for some suitable $\beta$ and $\gamma$?

Our first aim in this paper is trying to give answers to the above
questions in a unified way. We first note the fact: if $-A$ is the
generator of a bounded $\alpha$-times resolvent family, then $A$ is
a sectorial operator (see Section 2 for details). Therefore, it is
possible to define the fractional power $A^b$ for $b > 0$. By using
the theory of functional calculus for sectorial operators (see
\cite{Bal, Ha, Ko, MS}), we are able to give positive answers to the
questions above. These relations are clarified in Section \ref{3}.

The second purpose of this paper is to establish connections between
solutions of fractional Cauchy problems and Cauchy problems of first
order. Obverse that many phenomena in the theory of stochastic
processes, finance and hydrology are recently described through
fractional evolution equations, see \cite{BMN, BWM2, SGM, Za} and
the references therein. For example, Zaslavsky \cite{Za} introduced
the fractional kinetic equation
\begin{eqnarray}\label{1.3}
\begin{split}
&D_t^\alpha u(t,x)+ L_xu(t,x)=0,\quad t  >0\\
&u(0,x)=f(x),
\end{split}
\end{eqnarray}
for Hamiltonian chaos, where $\alpha\in(0,1)$,  $-L_x$ is the
generator of some continuous Markov process, and $D_t^\alpha$ is
understood the Caputo fractional derivative in time (see Section 2).
Baeumer and Meerschaert \cite{BM}, and Meerschaert and Scheffler
\cite{MS2} showed that the fractional Cauchy problem (\ref{1.3}) is
related to a certain class of subordinated stochastic processes.
More precisely, Theorem 3.1 in \cite{BM} shows that the formula
\begin{eqnarray}\label{1.5}
\begin{split}
u(t,x)=\int_0^\infty v((t/s)^\alpha,x)b_\alpha(s)ds,
\end{split}
\end{eqnarray}
yields a unique strong solution of (\ref{1.3}), where $b_\alpha$ is
the smooth density of the stable subordinator such that the Laplace
transform $\widehat{b_\alpha}(\lambda)=\int_0^\infty e^{-\lambda
t}b_\alpha(t)dt=e^{-\lambda^\alpha}$ and $v$ is the solution of
\begin{eqnarray}\label{1.4}
\begin{split}
&v_t'(t,x)+L_xv(t,x)=0,\quad t >0\\
&v(0,x)=f(x).
\end{split}
\end{eqnarray}
The formula (\ref{1.4}) can also be explained by the subordination
principle for fractional resolvent family, see Theorem 3.1 in
\cite{Baj}or Lemma \ref{subordinate}. If the fractional power of
$L_x$, $L_x^\alpha$, is defined, it is also of interest to know the
relations between the solution of (\ref{1.4}) and that of
\begin{eqnarray}\label{1.31}
\begin{split}
&D_t^\alpha u(t,x)+L_x^\alpha u(t,x)=0,\quad t >0\\
&u(0,x)=f(x).
\end{split}
\end{eqnarray}
In Section 4 we will give this connection.
Moreover, Baeumer, Meerschaert and Nane \cite{BMN} proved that Eq.
(\ref{1.3}) with $\alpha = 1/2$ and the initial value problem
\begin{eqnarray}\label{1.6}
\begin{split}
&u_t'(t,x)-L_x^2u(t,x)+\frac{t^{-1/2}}{\Gamma(1/2)}L_xf(x)=0,\quad t >0\\
&u(0,x)=f(x),
\end{split}
\end{eqnarray}
have the same solution; and (\ref{1.3}) with $\alpha = 1/3$ and
\begin{eqnarray}\label{1.7}
\begin{split}
&u_t'(t,x)+L_x^3u(t,x)+\frac{t^{-2/3}}{\Gamma(1/3)}L_xf(x)-\frac{t^{-1/3}}{\Gamma(2/3)}L_x^2f(x)=0,\quad t >0\\
&u(0,x)=f(x),
\end{split}
\end{eqnarray}
have the same solution, respectively. Another example is given by
Allouba and Zheng \cite{AZ} and DeBlassie \cite{De}, they consider
the case that $L_x = -\Delta$, the Laplace operator.
Keyantuo and Lizama \cite{KL} gave the connections between
(\ref{1.3}) with $\alpha = 1/m$ and ordinary non-homogeneous
equations. In Section 4, by analysing the solutions of fractional
Cauchy problems directly we can recover the result in \cite{KL}.
Moreover, we will consider more general fractional Cauchy problem
with the fractional order not necessarily a rational number.

Our work is organized as follows. We provide in Section 2 some
preliminaries of fractional resolvent families and fractional powers
of sectorial operators. And then give positive answers to the
questions $(Q_1)-(Q_5)$ in Section 3 and more results of fractional
generations are obtained as well. Finally, we discuss the relations
of solutions of fractional Cauchy problems and Cauchy problems of
first order in Section 4.


\section{Preliminaries}

Throughout the paper, $(X,\,\|\cdot\|)$ is a complex Banach space,
and $B(X)$ is the space of all bounded linear operators on $X$. $A$
is a closed linear operator on $X$. We assume throughout this paper
that $A$ is densely defined. By $D(A),R(A),\rho(A),\sigma(A)$ and
$R(\lambda,A)\,(\lambda\in\rho(A))$ we denote the domain, range,
resolvent set, spectrum set and resolvent of the operator $A$,
respectively.

Recall the Caputo fractional derivative of order $\alpha>0$
\begin{eqnarray*}
D_t^\alpha f(t):=J_t^{m-\alpha}\frac{d^m}{dt^m}f(t),
\end{eqnarray*}
where $m$ is the smallest integer greater than or equal to $\alpha$,
and the Riemann-Liouville fractional integral of order $\beta>0$
\begin{eqnarray*}
J_t^\beta f(t)=g_\beta*f(t):=\int_0^tg_\beta(t-s)f(s)ds,
\end{eqnarray*}
where
\newcommand\dif{\mathrm{d}}
            \[
            g_\beta(t):=
            \begin{cases}
            \frac{t^{\beta-1}}{\Gamma(\beta)}, &\quad t>0;\\
            0, &\quad t\leq 0.
            \end{cases}
            \]
Set moreover $g_0(t):=\delta(t)$, the Dirac delta-function. For
details in fractional calculus, we refer the reader to \cite{KST,
Po}.

The Mittag-Leffler function is defined by
\begin{equation}\label{Mittag}
E_{\alpha, \beta}(z) : = \sum_{n=0}^\infty \frac{z^n}{\Gamma(\alpha
n + \beta)}
 = \frac{1}{2\pi i} \int_C \frac{ \mu^{\alpha - \beta} e^\mu}{\mu^\alpha -z} \,d\mu,
 \quad \alpha,\beta>0, z \in \mathbb{C},
 \end{equation}
where the path $C$ is a loop which starts and ends at $-\infty$, and
encircles the disc $|t| \le |z|^{1/\alpha}$ in the positive sense.
$E_{\alpha}(z) := E_{\alpha, 1}(z)$. The Mittag-Leffler function
$E_\alpha(t)$ satisfies the fractional differential equation
$$
D_t^\alpha E_\alpha(\omega t^\alpha)= \omega E_\alpha(\omega
t^\alpha).
$$
The most interesting properties of the Mittag-Leffler functions are
associated with their Laplace integral
\begin{equation}\label{Elaplace}
\int_0^\infty e^{-\lambda t} t^{\beta -1} E_{\alpha, \beta}(\omega
t^\alpha) \,dt = \frac{\lambda^{\alpha - \beta}}{\lambda^\alpha -
\omega}, \quad Re \lambda > \omega^{1/\alpha}, \omega>0
\end{equation}
and with their asymptotic expansion as $z \to \infty$. If $0< \alpha
<2$, $\beta >0$, then
\begin{equation}\label{halfalpha}
E_{\alpha, \beta}(z) = \frac{1}{\alpha}
z^{(1-\beta)/\alpha}\exp(z^{1/\alpha})+ \eps_{\alpha,\beta}(z),
\quad |\arg z| \le \frac{1}{2}\alpha \pi,
\end{equation}
\begin{equation}\label{Eestimate}
 E_{\alpha,\beta}(z) = \eps_{\alpha,\beta}(z), \quad |\arg(-z)| < (1-\frac{1}{2}\alpha)\pi,
\end{equation}
where
$$\eps_{\alpha,\beta}(z)= - \sum_{n=1}^{N-1} \frac{z^{-n}}{\Gamma(\beta - \alpha n)} + O(|z|^{-N})$$
as $z \to \infty$, and the $O$-term is uniform in $\arg z$ if
$|\arg(-z) | \le (1- \alpha/2 - \epsilon)\pi$. It is also of
interest to know the relations between the Mittag-Leffler function
and function of Wright type:
$$
E_\gamma(z) = \int_0^\infty \Psi_\gamma(t)e^{z t} dt, \quad z \in
\Complex, \, 0<\gamma<1,
$$
where
\begin{equation}\label{wright}
\Psi_\gamma(z):= \sum_{n=0}^\infty \frac{(-z)^n}{n!\Gamma(-\gamma n
+1-\gamma)}= \frac{1}{2\pi i} \int_\Gamma \mu^{\gamma
-1}\exp(\mu-z\mu^\gamma)d\mu
\end{equation}
with $\Gamma$ a contour which starts and ends at $-\infty$ and
encircles the origin once counterclockwise. For more properties of
the Mittag-Leffler function and function of Wright type, we refer to
\cite{GM, GLM}.

 We now turn to a short
introduction to fractional powers of sectorial operators. Let $A$ be
a densely defined closed linear operator on Banach space $X$.

\begin{defn}\label{sectorial operator}
The operator $A$ is called sectorial of angle $\omega \in [0, \pi)$
($A\in$ Sect$(\omega)$, in short) if

 1) $\sigma(A)$ is contained in the closure of the sector
$$
\Sigma_\omega: = \{z \in \mathbb{C}: z \not= 0 \mbox{ and }|\arg z|
< \omega\},
$$
for $\omega >0$ or $\Sigma_0: =(0,\infty)$.

 2) For every $\omega' \in (\omega, \pi)$, $\sup\{\|zR(z,A)\|: z \in \mathbb{C} \backslash \overline{\Sigma_{\omega'}}
 \} < \infty$.

 A family of operators $(A_\tau)_{\tau \in \Lambda}$ is called uniformly sectorial of angle $\omega \in [0, \pi)$ if $A_\tau \in$
 Sect$(\omega)$ for each $\tau$, and $\sup\{\|zR(z,A_\tau)\|: \tau \in \Lambda, z \in \mathbb{C} \backslash \overline{\Sigma_{\omega'}}
 \} < \infty$.
\end{defn}

If $0\in\rho(A)$ for a sectorial operator $A$, then we can define
its fractional powers as follows. For $b>0$, define $A^{-b}$ by
\begin{eqnarray}\label{2.2}
A^{-b}:=-\frac{1}{2\pi
i}\int_{\Gamma(\zeta)}\lambda^{-b}R(\lambda,A)d\lambda,
\end{eqnarray}
where the path $\Gamma(\zeta)$ runs in the resolvent set of $A$ from
$\infty e^{-i\zeta}$ to $\infty e^{i\zeta}$, while avoiding the
negative real axis and the origin, and $\lambda^b$ is taken as the
principle branch. Noticing that $A^{-b}\in B(X)$ is injective for
all $b>0$, we can define $A^b:=(A^{-b})^{-1}$ and $A^0:=I$. On the
other hand, for a sectorial operator $A$ without the assumption that
$0\in \rho(A)$, since $A+\epsilon$ is sectorial and
$0\in\rho(A+\epsilon)$, it makes sense to consider the operator
$(A+\epsilon)^b$ and define the fractional powers of $A$ by
\begin{eqnarray*}
A^b:=s-\lim_{\epsilon\rightarrow0^+}(A+\epsilon)^b
\end{eqnarray*}
for $b>0$ and so corresponding results for such fractional powers
can be obtained by similar argument (cf. \cite{Ha, MS}).  We collect
some basic properties of fractional powers in the following lemma.

\begin{lem}\cite{Ha}\label{Sectorial operator lem}
Let $b>0$ and $A^{-b}$ is defined as above. The following assertions
hold.

(a) $A^b$ is closed and $D(A^b)\subset D(A^c)$ for $b>c>0$.

(b) $A^bx=A^{b-n}A^nx$ for all $x\in D(A^n)$ and
$n>b,n\in\mathbb{N}$.

(c) Let $d>b>0$. If $B\subset A^b$ and $D(B)=D(A^d)$, then $B$ is
closable and $\overline{B}=A^b$, where $\overline{B}$ is the closure
of $B$.

(d) $\sigma(A^b)=(\sigma(A))^b$.

(e) If $A \in sect(\omega)$ for some $\omega \in (0, \pi)$, then for
every $\beta \in (0,\pi/\omega)$ the operator $A^\beta$ is sectorial
of angle $\beta \omega$.

(f) If $A \in sect(\omega)$ for some $\omega \in (0, \pi)$, then the
family $(A+\eps)_{\eps \ge 0}$ is uniformly sectorial of angle
$\omega$.
\end{lem}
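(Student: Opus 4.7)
The plan is to base everything on the Dunford--Riesz integral \eqref{2.2} together with the multiplicativity law
\[
A^{-b}A^{-c}=A^{-(b+c)},\qquad b,c>0,
\]
which I would prove first by expressing the product as an iterated contour integral on two nested paths $\Gamma(\zeta_1),\Gamma(\zeta_2)$, applying the resolvent identity to decouple the two resolvents, and computing the resulting scalar beta-type integral. I first assume $0\in\rho(A)$; once (f) is in hand, the general case follows by taking $\varepsilon\to 0^+$ in statements involving $A+\varepsilon$, using the definition $A^b := s\text{-}\lim(A+\varepsilon)^b$ from the excerpt.

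For (a), $A^b=(A^{-b})^{-1}$ is closed because $A^{-b}\in B(X)$ is injective, and the factorization $A^{-b}=A^{-c}A^{-(b-c)}$ gives $R(A^{-b})\subset R(A^{-c})$, i.e.\ $D(A^b)\subset D(A^c)$ for $b>c>0$. For (b), extending multiplicativity to mixed signs yields $A^bA^{-n}=A^{b-n}$ as a bounded operator for any integer $n>b$, and since $A^{-n}$ coincides with the classical inverse of $A^n$ (deform $\Gamma(\zeta)$ appropriately and use that $0\in\rho(A)$), applying this to $A^nx$ for $x\in D(A^n)$ gives $A^bx=A^{b-n}A^nx$. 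For (c), I would show $D(A^d)$ is a core for $A^b$: given $x=A^{-b}y\in D(A^b)$, approximate $y$ in $X$ by $y_n\in D(A^{d-b})$ (Yosida iterates $n^k(n+A)^{-k}y$ with $k\geq\lceil d-b\rceil$); then $x_n:=A^{-b}y_n=A^{-d}(A^{d-b}y_n)\in D(A^d)$ satisfies $x_n\to x$ and $A^bx_n=y_n\to A^bx$, so any $B\subset A^b$ defined on $D(A^d)$ has $\overline B=A^b$.

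For (d), the inclusion $\sigma(A^b)\subset\sigma(A)^b$ follows by exhibiting $R(\lambda,A^b)=(2\pi i)^{-1}\int_\Gamma(\lambda-\mu^b)^{-1}R(\mu,A)\,d\mu$ for $\lambda\notin\sigma(A)^b$, and the reverse inclusion is the standard spectral mapping theorem for the holomorphic functional calculus of sectorial operators. For (e), (d) gives $\sigma(A^\beta)\subset\overline{\Sigma_{\beta\omega}}$ whenever $\beta\omega<\pi$; to control $\|\lambda R(\lambda,A^\beta)\|$ for $\lambda\notin\overline{\Sigma_{\omega'}}$ with $\beta\omega<\omega'<\pi$, I would write $R(\lambda,A^\beta)$ as a Dunford integral on a path $\Gamma$ whose $\beta$-th power $\{\mu^\beta:\mu\in\Gamma\}$ stays uniformly away from the ray through $\lambda$, then insert the sectorial bound $\|\mu R(\mu,A)\|\leq M$ and estimate termwise. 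I expect this contour-choice estimate to be the main technical obstacle, because the implicit constant degenerates as $\omega'\downarrow\beta\omega$ and the branch of $\mu\mapsto\mu^\beta$ must be steered carefully through the sector. Finally, (f) follows from $R(z,A+\varepsilon)=R(z-\varepsilon,A)$ combined with the geometric observation that $z\notin\overline{\Sigma_{\omega'}}$ implies $\mathrm{dist}(z,[0,\infty))\geq|z|\sin\omega'$, so $|z-\varepsilon|\geq|z|\sin\omega'$ and $z-\varepsilon\notin\overline{\Sigma_{\omega'}}$ for every $\varepsilon\geq 0$, yielding $\|zR(z,A+\varepsilon)\|\leq M/\sin\omega'$ uniformly in $\varepsilon$.
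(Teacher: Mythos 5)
The paper gives no proof of this lemma at all --- it is quoted verbatim from Haase's monograph \cite{Ha} (with the parallel development in \cite{MS}) --- so there is no in-paper argument to measure you against. Your outline is the standard route taken in those sources: multiplicativity $A^{-b}A^{-c}=A^{-(b+c)}$ via the resolvent identity on nested contours, from which (a) and (b) follow; the core argument for (c) by approximating $y=A^bx$ with Yosida-type iterates lying in $D(A^{d-b})$; Dunford-integral representations of $R(\lambda,A^\beta)$ for (d) and (e); and the translation-of-the-resolvent computation for (f). Part (f) as you state it is complete and correct: subtracting $\varepsilon\ge 0$ from a point outside $\overline{\Sigma_{\omega'}}$ can only increase the modulus of its argument, and $|z-\varepsilon|\ge\mathrm{dist}(z,[0,\infty))\ge|z|\sin\omega'$ gives the uniform bound $\|zR(z,A+\varepsilon)\|\le M_{\omega'}/\sin\omega'$.

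Two points remain genuinely open in your write-up. First, (e) is the assertion this paper actually leans on (it drives Theorem \ref{main}(a)), and you explicitly defer its only hard step --- the uniform bound on $\|\lambda R(\lambda,A^\beta)\|$ off $\overline{\Sigma_{\omega'}}$ for $\beta\omega<\omega'<\pi$ --- as ``the main technical obstacle.'' That estimate is where all the work is; the clean way to do it is to first treat $0<\beta<1$ via the Balakrishnan--Kato representation $(\lambda+A^\beta)^{-1}=\frac{\sin\beta\pi}{\pi}\int_0^\infty t^\beta\bigl(\lambda^2+2\lambda t^\beta\cos\beta\pi+t^{2\beta}\bigr)^{-1}(t+A)^{-1}\,dt$ (or the composition rule of the holomorphic functional calculus), and then reach all $\beta\in(0,\pi/\omega)$ through the law $(A^\beta)^\gamma=A^{\beta\gamma}$; a bare contour argument for general $\beta$ runs into exactly the branch-steering problem you flag. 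Second, the reduction of the case $0\notin\rho(A)$ to the invertible case by ``taking $\varepsilon\to0^+$'' is not a formality for every item: with the definition $A^b=s\hbox{-}\lim_{\varepsilon\to0^+}(A+\varepsilon)^b$, a strong limit of closed operators need not be closed, so even (a) requires its own argument in the general case (the references obtain closedness by identifying $A^b$ with the closure of the Balakrishnan operator, or through the extended functional calculus), and (d) likewise needs a separate spectral-mapping argument rather than a limit of the invertible-case identity. These are known results, but they do not follow from the limiting procedure as stated.
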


Finally we recall the notion of $\alpha$-times resolvent families.
Also here we suppose that $A$ is a densely defined closed linear
operator on $X$.

\begin{defn}\label{resolvent family}
Let $\alpha>0$. A family $\{S_{\alpha}(t)\}_{t \geq 0} \subset B(X)$
is called an $\alpha$-times resolvent family generated by $A$ if the
following conditions are satisfied:

(a) $S_\alpha (t)$ is strongly continuous for $t \geq 0$ and
$S_\alpha (0)=I$;

(b) $S_\alpha (t) A \subset A S_\alpha (t)$ for $t \geq 0$;

(c) for $x \in D(A)$, the resolvent equation
\begin{eqnarray}\label{resolvent equation}
S_{\alpha}(t)x = x + \int_0^t g_\alpha (t-s) S_{\alpha}(s)Ax ds
\end{eqnarray}
holds for all $t\geq 0$.
\end{defn}

\begin{rem}\label{changeorder}
Since $A$ is densely defined and closed, it is easy to show that for
all $x \in X$, $\int_0^t g_\alpha (t-s) S_{\alpha}(s)x ds\in D(A)$
and $S_{\alpha}(t)x = x + A(\int_0^t g_\alpha (t-s) S_{\alpha}(s)x
ds)$.
\end{rem}

\begin{defn}
(a) An $\alpha$-times resolvent family $\{S_{\alpha}(t)\}_{t \geq
0}$ is said to be bounded if there exist constants $M\geq1$ such
that $ \|S_{\alpha}(t)\|\leq M$ for all $t\geq0$. If $A$ generates a
bounded $\alpha$-times resolvent family $S_\alpha$, we will write
$(A,S_{\alpha})\in \mathcal{C}_\alpha(0)$ or $A\in
\mathcal{C}_\alpha(0)$ for short.

(b) Let $\theta_0 \in (0,\pi/2]$ and $\omega_0 \geq 0$. An
$\alpha$-times resolvent family $\{S_{\alpha}(t)\}_{t \geq 0}$ is
called analytic of angle $\theta_0$ for some $\theta_0 \in
(0,\pi/2]$ if $S_\alpha(t)$ admits an analytic extension to the
sector $\Sigma_{\theta_0}$. An analytic $\alpha$-times resolvent
family $\{S_{\alpha}(z)\}_{z \in \Sigma_{\theta_0}}$  is said to be
bounded if for each $\theta\in(0,\theta_0)$ there exists a constant
$M_\theta$ such that
\begin{eqnarray*}
\|S_\alpha(z)\|\leq M_\theta,\quad z\in\Sigma_\theta.
\end{eqnarray*}
If $A$ generates a bounded analytic  $\alpha$-times resolvent family
$S_\alpha$ of angle $\theta_0$, we will write $(A,S_{\alpha})\in
\mathcal{A}_\alpha(\theta_0)$ or $A\in \mathcal{A}_\alpha(\theta_0)$
for short.
\end{defn}


\begin{lem}\cite{Baj}\label{Laplace transform}
Let $0 < \alpha \leq 2$. $A\in \mathcal{C}_\alpha(0)$ if and only if
$\Sigma_{\pi\alpha/2}\subset\rho(A)$ and there exists a strongly
continuous function $S_\alpha:\mathbb{R}_+\rightarrow B(X)$ such
that $\|S_\alpha(t)\|\leq M\mbox{ for all }t\geq0$ and
\begin{equation}\label{Laplace}
\lambda^{\alpha-1}(\lambda^\alpha-A)^{-1}x=\int_0^\infty e^{-\lambda
t}S_\alpha(t)xdt,\quad \lambda\in \Sigma_{\pi/2}
\end{equation}
for all $x\in X$. Furthermore, $\{S_\alpha (t)\}_{t \geq 0}$ is the
$\alpha$-times resolvent family generated by $A$.
\end{lem}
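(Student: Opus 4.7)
The plan is to carry out the fractional analogue of the Hille--Yosida theorem via scalar Laplace transforms. The three key inputs are $\widehat{g_\alpha}(\lambda)=\lambda^{-\alpha}$, injectivity of the Laplace transform on bounded strongly continuous $B(X)$-valued functions, and closedness of $A$ (to pull $A$ in and out of Bochner integrals). Throughout, for $\mathrm{Re}\,\lambda>0$ I would work with $\hat S_\alpha(\lambda)x:=\int_0^\infty e^{-\lambda t}S_\alpha(t)x\,dt$, which converges absolutely by the uniform bound $\|S_\alpha(t)\|\le M$ and defines an analytic $B(X)$-valued function on $\Sigma_{\pi/2}$.

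For necessity, assume $A\in\mathcal{C}_\alpha(0)$. I would Laplace-transform the variant of (\ref{resolvent equation}) recorded in Remark \ref{changeorder}, $S_\alpha(t)x=x+A\int_0^t g_\alpha(t-s)S_\alpha(s)x\,ds$ valid for all $x\in X$; closedness of $A$ lets it exit the transform, yielding $\hat S_\alpha(\lambda)x=\lambda^{-1}x+\lambda^{-\alpha}A\hat S_\alpha(\lambda)x$. Rearranging gives $(\lambda^\alpha-A)\hat S_\alpha(\lambda)x=\lambda^{\alpha-1}x$ on all of $X$, so $\lambda^\alpha-A$ is surjective with right inverse $\lambda^{1-\alpha}\hat S_\alpha(\lambda)$. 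Transforming (\ref{resolvent equation}) directly for $x\in D(A)$ gives the dual identity $\hat S_\alpha(\lambda)(\lambda^\alpha-A)y=\lambda^{\alpha-1}y$, which forces injectivity. Hence $\lambda^\alpha\in\rho(A)$ with $R(\lambda^\alpha,A)=\lambda^{1-\alpha}\hat S_\alpha(\lambda)$, which is exactly (\ref{Laplace}). As $\lambda$ sweeps $\Sigma_{\pi/2}$, $\lambda^\alpha$ sweeps $\Sigma_{\pi\alpha/2}$ (here $\alpha\le 2$ is used), giving the spectral inclusion.

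For sufficiency, I would verify the three conditions of Definition \ref{resolvent family} for the given $S_\alpha$. Strong continuity and the bound are hypotheses; the remaining value $S_\alpha(0)=I$ I would obtain from an Abelian-limit argument applied to $\lambda\hat S_\alpha(\lambda)x=\lambda^\alpha R(\lambda^\alpha,A)x$, which converges to $x$ as $\lambda\to+\infty$ along the real axis for $x\in D(A)$ and extends by density and uniform boundedness of $\{S_\alpha(t)\}$. Condition (b) follows from the commutation of $A$ with its resolvent, closedness of $A$, and uniqueness of Laplace transforms. For (c), for $x\in D(A)$ I would compute
\[
\widehat{g_\alpha*S_\alpha Ax}(\lambda)=\lambda^{-\alpha}\hat S_\alpha(\lambda)Ax=\lambda^{-1}\bigl(\lambda^\alpha R(\lambda^\alpha,A)x-x\bigr)=\hat S_\alpha(\lambda)x-\lambda^{-1}x,
\]
and invoke injectivity of the Laplace transform. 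The main technical obstacle, as usual in fractional Hille--Yosida results, is the careful handling of $S_\alpha(0)=I$ via the Abel-type limit on the dense subspace $D(A)$, together with the bookkeeping needed to place $\lambda^\alpha$ in $\rho(A)$ simultaneously for all $\lambda\in\Sigma_{\pi/2}$.
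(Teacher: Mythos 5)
Your argument is correct and follows essentially the same route as the source the paper cites for this lemma (the paper gives no proof of its own, deferring to \cite{Baj}): the necessity direction by Laplace-transforming the resolvent equation and its Remark \ref{changeorder} variant to identify $\lambda^{1-\alpha}\hat S_\alpha(\lambda)$ with $R(\lambda^\alpha,A)$, and the sufficiency direction by uniqueness of Laplace transforms plus the Abelian limit for $S_\alpha(0)=I$. I see no gaps; the only point worth making explicit is the resolvent bound $\|R(\lambda^\alpha,A)\|\le M|\lambda|^{1-\alpha}/\mathrm{Re}\,\lambda$ drawn from (\ref{Laplace}), which is what makes $R(\lambda^\alpha,A)Ax\to 0$ in your Abel-type limit.
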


In the sequel we need the following important lemma on analyticity
criteria for $\alpha$-times resolvent families.

\begin{lem}\label{analytic criteria}
Let $\alpha\in(0,2)$ and
$\theta_0\in(0,\min\{\frac{\pi}{2},\frac{\pi}{\alpha}-\frac{\pi}{2}\}]$.
The following assertions are equivalent.

(a) $(A,S_\alpha)\in \mathcal{A}_\alpha(\theta_0)$.

(b) $\Sigma_{\alpha(\frac{\pi}{2}+\theta_0)}\in\rho(A)$, and for
each $\theta\in(0,\theta_0)$, there exists a constant $M_\theta$
such that
$$
\|\lambda (\lambda-A)^{-1}\|\leq M_\theta, \quad
\lambda\in\Sigma_{\alpha(\frac{\pi}{2}+\theta)}.
$$

(c) $-A \in$ {\rm Sect}$(\pi-(\frac{\pi}{2}+\theta_0)\alpha)$.

\end{lem}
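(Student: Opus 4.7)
The plan is to prove the chain \textbf{(c)} $\Leftrightarrow$ \textbf{(b)} $\Leftrightarrow$ \textbf{(a)}, with the middle equivalence being the substantive one.

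First I would dispense with \textbf{(b)} $\Leftrightarrow$ \textbf{(c)}, which is a direct unpacking of Definition~\ref{sectorial operator} applied to $-A$. Setting $\omega=\pi-(\frac{\pi}{2}+\theta_0)\alpha$, the spectral condition $\sigma(-A)\subset\overline{\Sigma_\omega}$ is the same as $\sigma(A)\subset-\overline{\Sigma_\omega}=\Complex\setminus\Sigma_{\alpha(\pi/2+\theta_0)}$, and the uniform resolvent bound $\|z R(z,-A)\|\leq M_\theta$ on $\Complex\setminus\overline{\Sigma_{\omega'}}$ translates, via $z\mapsto-\lambda$, into $\|\lambda(\lambda-A)^{-1}\|\leq M_\theta$ on $\Sigma_{\alpha(\pi/2+\theta)}$ as $\omega'$ ranges over $(\omega,\pi)$. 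The hypothesis $\theta_0\leq\frac{\pi}{\alpha}-\frac{\pi}{2}$ ensures $\alpha(\frac{\pi}{2}+\theta_0)\leq\pi$, so the sector in question is genuinely a proper sector of $\Complex$.

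For \textbf{(a)} $\Rightarrow$ \textbf{(b)}, I would start from the Laplace representation in Lemma~\ref{Laplace transform}:
\begin{equation*}
\lambda^{\alpha-1}(\lambda^\alpha-A)^{-1}x=\int_0^\infty e^{-\lambda t}S_\alpha(t)x\,dt,\qquad\RE\lambda>0.
\end{equation*}
Given analyticity and boundedness of $S_\alpha$ on $\Sigma_{\theta_0}$, for each $\theta\in(0,\theta_0)$ and each $\phi\in(-\theta,\theta)$ Cauchy's theorem lets me rotate the ray of integration to $\{re^{i\phi}:r>0\}$:
\begin{equation*}
\lambda^{\alpha-1}(\lambda^\alpha-A)^{-1}x=\int_0^\infty e^{-\lambda e^{i\phi}r}S_\alpha(e^{i\phi}r)x\,e^{i\phi}dr,
\end{equation*}
which is absolutely convergent whenever $\RE(\lambda e^{i\phi})>0$, i.e.\ for $\lambda\in\Sigma_{\pi/2+\theta}$. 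Thus $\lambda^\alpha\mapsto(\lambda^\alpha-A)^{-1}$ extends holomorphically to the sector $\Sigma_{\alpha(\pi/2+\theta)}$, and estimating the integral by $M_\theta/\RE(\lambda e^{i\phi})$ yields a bound of the form $\|\lambda^{\alpha-1}(\lambda^\alpha-A)^{-1}\|\leq C_\theta/|\lambda|$, which rearranges to the desired resolvent bound for $\mu=\lambda^\alpha$.

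For \textbf{(b)} $\Rightarrow$ \textbf{(a)}, the idea is to define $S_\alpha(z)$ by the complex inversion formula
\begin{equation*}
S_\alpha(z)x=\frac{1}{2\pi i}\int_\Gamma e^{\lambda z}\lambda^{\alpha-1}(\lambda^\alpha-A)^{-1}x\,d\lambda,
\end{equation*}
where $\Gamma$ is a Hankel-type contour of the form $\{re^{\pm i(\pi/2+\theta')}:r\geq r_0\}\cup\{r_0e^{i\psi}:|\psi|\leq\pi/2+\theta'\}$ with $\theta<\theta'<\theta_0$. Since the hypothesis in (b) supplies the bound $\|\lambda^{\alpha-1}(\lambda^\alpha-A)^{-1}\|\leq M_{\theta'}/|\lambda|$ on such a contour, the integral converges absolutely whenever $\RE(\lambda z)<0$ on the rays of $\Gamma$, which is exactly the condition $z\in\Sigma_\theta$; moreover a standard scaling $\lambda\mapsto\lambda/|z|$ gives $\|S_\alpha(z)\|\leq M_\theta'$ uniformly on $\Sigma_\theta$. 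Agreement of this $S_\alpha$ with the original on $(0,\infty)$ follows from taking Laplace transforms and invoking uniqueness via Lemma~\ref{Laplace transform}.

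The main obstacle is bookkeeping the angle arithmetic: the map $\lambda\mapsto\lambda^\alpha$ sends $\Sigma_{\pi/2+\theta}$ onto $\Sigma_{\alpha(\pi/2+\theta)}$, and the restriction $\theta_0\leq\frac{\pi}{\alpha}-\frac{\pi}{2}$ is exactly what keeps $\alpha(\pi/2+\theta)<\pi$, so that the image sector is well-defined and the deformed contours $\Gamma$ in the $\lambda$-plane lie in the resolvent set of $A$ via the hypothesis on $A$ in the $\lambda^\alpha$-plane. Once the angle inequalities are verified, each direction reduces to a routine rotation-of-contour argument, with the same estimates used in the classical analytic semigroup characterization.
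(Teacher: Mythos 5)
Your proposal is correct, and it in fact supplies more than the paper does: the paper's ``proof'' of this lemma consists of citing \cite{Baj} for the equivalence of (a) with (b) and observing that (b) $\Leftrightarrow$ (c) is a definitional unpacking (as in Remark 3 of \cite{KaL}). Your treatment of (b) $\Leftrightarrow$ (c) via $z\mapsto-\lambda$ is exactly that unpacking, and your two contour arguments for (a) $\Leftrightarrow$ (b) — rotating the ray of integration in the Laplace representation of Lemma~\ref{Laplace transform} to get the resolvent estimate on $\Sigma_{\alpha(\pi/2+\theta)}$, and the Hankel-contour inversion with the scaling $\lambda\mapsto\lambda/|z|$ to recover a bounded analytic family — are the standard arguments underlying the citation, with the angle bookkeeping done correctly. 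The only point you gloss over is in (b) $\Rightarrow$ (a): before Lemma~\ref{Laplace transform} can be invoked to identify the contour-defined family as \emph{the} $\alpha$-times resolvent family, one must check that it is strongly continuous up to $t=0$ with $S_\alpha(0)=I$ (there is no ``original'' $S_\alpha$ to agree with in this direction); this is routine — verify $S_\alpha(t)x\to x$ for $x\in D(A)$ using the resolvent identity and conclude by density and the uniform bound — but it should be said.
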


The equivalence of $(a)$ with $(b)$ is given in \cite{Baj}. $(b)$ is
equivalent to $(c)$ by the definition of sectorial operators, which
is also mentioned in Remark 3 of \cite{KaL}.

\begin{rem}\label{rem-anacri}
(a) By Lemma \ref{analytic criteria}, $-A$ generates a bounded
analytic $\alpha$-times resolvent family if and only if $A$ is
sectorial of angle $\varphi<\pi-\pi\alpha/2$.

(b) If $-A$ generates a bounded $\alpha$-times resolvent family,
then $A$ is sectorial of angle $\pi-\pi\alpha/2$.
\end{rem}

Recall that if $\{S_\alpha(z)\}_{z \in \Sigma_\theta}$ is a bounded
analytic $\alpha$-times resolvent family with generator $A$, then
for $t >0$,
\begin{equation}\label{3.1}
S_\alpha(t)=\frac{1}{2\pi i}\int_{\Gamma_{\theta_0}}e^{\lambda
t}\lambda^{\alpha-1}(\lambda ^\alpha-A)^{-1}d\lambda,
\end{equation}
where $\Gamma_{\theta_0}$ is any piecewise smooth curve in
$\Sigma_{\pi/2 + \theta}$ going from $\infty e^{-i(\pi/2 +
\theta_0})$ to $\infty e^{i(\pi/2 + \theta_0})$ for some $0<\theta_0
< \theta$  (cf. \cite{Baj, CL}).

 The following subordination principle is important in the theory of
 fractional resolvent families, which will be extended to more general cases in Theorem \ref{main}.

 \begin{lem}\cite{Baj}\label{subordinate}
 Let $0<  \beta<\alpha \le 2$, $\gamma = \beta/\alpha$ and $\omega
 \ge 0$. If $A \in \mathcal{C}_\alpha(0)$ then $A \in
\mathcal{C}_\beta(0)$ and the following representation holds
$$
S_\beta(t) = \int_0^\infty \varphi_\gamma(t,s)S_\alpha(s) ds, \quad
t >0,
$$
where $\varphi_\gamma(t,s)= t^{-\gamma}\Phi_\gamma(st^{-\gamma})$
with $\Phi_\gamma$ defined by (\ref{wright}), in the strong sense.
 \end{lem}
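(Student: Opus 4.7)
The natural approach is to verify the claimed formula through Laplace transforms, using the characterization of $\alpha$-times resolvent families given in Lemma \ref{Laplace transform}. Define the candidate family
$$T_\beta(t) x := \int_0^\infty \varphi_\gamma(t,s) S_\alpha(s) x \, ds, \quad t>0, \quad T_\beta(0) = I.$$
The plan is to show (i) this integral is well-defined, $T_\beta$ is strongly continuous, and $\|T_\beta(t)\| \leq M$; (ii) its Laplace transform equals $\lambda^{\beta-1}(\lambda^\beta - A)^{-1}$; then Lemma \ref{Laplace transform} identifies $T_\beta = S_\beta$ with generator $A$, so $A \in \mathcal{C}_\beta(0)$ and the representation holds.

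For (i), the key fact about the Wright function (\ref{wright}) is that $\Phi_\gamma \geq 0$ with $\int_0^\infty \Phi_\gamma(u)\,du = 1$. The scaling change of variables $u = s t^{-\gamma}$ gives $\int_0^\infty \varphi_\gamma(t,s) f(s)\,ds = \int_0^\infty \Phi_\gamma(u) f(u t^\gamma)\,du$, which shows $\int_0^\infty \varphi_\gamma(t,s)\,ds = 1$, and so $\|T_\beta(t)\| \leq M$; the same identity, applied with $f(s) = S_\alpha(s)x$ and dominated convergence, gives strong continuity on $[0,\infty)$, including $T_\beta(t)x \to x$ as $t \to 0^+$. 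The sectorial condition $\Sigma_{\pi\beta/2} \subset \Sigma_{\pi\alpha/2} \subset \rho(A)$ is automatic from $\beta < \alpha$ and Lemma \ref{Laplace transform} applied to $A \in \mathcal{C}_\alpha(0)$.

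For (ii), the crucial identity is
$$\int_0^\infty e^{-\lambda t} \varphi_\gamma(t,s)\,dt = \lambda^{\gamma-1} e^{-s\lambda^\gamma}, \quad \RE\lambda > 0,$$
which is obtained by inserting the contour representation of $\Phi_\gamma$ in (\ref{wright}) into the left-hand side and deforming suitably. Granting this, Fubini (justified by the boundedness of $S_\alpha$ and positivity of $\varphi_\gamma$) gives
$$\int_0^\infty e^{-\lambda t} T_\beta(t) x \, dt = \lambda^{\gamma-1} \int_0^\infty e^{-s\lambda^\gamma} S_\alpha(s) x \, ds = \lambda^{\gamma-1} (\lambda^\gamma)^{\alpha-1} (\lambda^{\gamma\alpha} - A)^{-1} x,$$
where the second equality uses (\ref{Laplace}) with the spectral parameter $\mu = \lambda^\gamma$ (legal since $\RE\lambda^\gamma > 0$ and $\lambda^\gamma \in \Sigma_{\pi/2}$). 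Since $\gamma\alpha = \beta$, this collapses to $\lambda^{\beta-1}(\lambda^\beta - A)^{-1} x$, completing (ii).

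The main technical obstacle is the Laplace identity for $\varphi_\gamma$: it requires a careful contour manipulation and an interchange of integrals justified by the exponential decay of $\mu^{\gamma-1}e^\mu$ along the Hankel contour combined with the bound $\RE \mu^\gamma \geq c|\mu|^\gamma$ on a suitable deformation. Everything else is a routine transcription of the Laplace-transform machinery; commutation with $A$ in Definition \ref{resolvent family}(b) is inherited from that of $S_\alpha$, and property (c) is encoded in the Laplace identity via $\lambda^{-\beta}[\lambda^{\beta-1}(\lambda^\beta - A)^{-1} - \lambda^{-1}] = \lambda^{-1}(\lambda^\beta - A)^{-1} A$ on $D(A)$.
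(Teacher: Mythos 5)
Your proof is correct, and it is essentially the classical argument from \cite{Baj} (which is exactly what this paper cites for the lemma rather than proving it in the text): define the subordinated family, use positivity and normalization of the Wright function to get boundedness and strong continuity, compute the Laplace transform via $\widehat{\varphi_\gamma(\cdot,s)}(\lambda)=\lambda^{\gamma-1}e^{-s\lambda^\gamma}$, and invoke the generation theorem (Lemma \ref{Laplace transform}); all the pieces check out, including the substitution $\lambda\mapsto\lambda^\gamma$ in (\ref{Laplace}), which is legitimate because $\lambda^\gamma\in\Sigma_{\pi\gamma/2}\subset\Sigma_{\pi/2}$ for $\gamma<1$, and the sector inclusion $\Sigma_{\pi\beta/2}\subset\Sigma_{\pi\alpha/2}\subset\rho(A)$. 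Where the paper differs is in its own internal re-derivation (Corollary \ref{bounded-cor}(b) together with Remark \ref{integral representation2 cor rem}(b)): there the subordination principle is obtained as the special case $\beta=1$ of the general formula (\ref{subordination principle}), which is proved by representing $S_\gamma^\beta(t)$ as a contour integral $\frac{1}{2\pi i}\int_{\Gamma_\omega}E_\gamma(-\mu^\beta t^\gamma)(A-\mu)^{-1}d\mu$ against the resolvent of the sectorial operator $A$, inserting the Laplace representation of $(A-\mu)^{-1}$ in terms of $S_\alpha$, and then identifying the resulting kernel $f_{\alpha,\beta}^1$ with $\varphi_{\beta/\alpha}$ by Cauchy's integral formula. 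Your route is more elementary and self-contained (it needs only Lemma \ref{Laplace transform} and the Wright-function Laplace identity, with no functional calculus, no reduction to $0\in\rho(A)$, and no approximation steps), while the paper's route is heavier but yields at once the generalized subordination formula for arbitrary admissible $\beta$ and $\gamma$, of which this lemma is the $\beta=1$ case. The only point you leave as a black box is the identity $\int_0^\infty e^{-\lambda t}\varphi_\gamma(t,s)\,dt=\lambda^{\gamma-1}e^{-s\lambda^\gamma}$; since the paper itself records this in Corollary \ref{bounded-cor}(b) and it follows from the Hankel-contour representation (\ref{wright}) exactly as you indicate, this is not a gap.
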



\section{Fractional powers of generators of fractional resolvent families}\label{3}

In this section we consider the fractional generations for bounded
analytic fractional resolvent families. The following theorem is our
main result, which gives the answer to question $(Q_5)$ in the
Introduction.

\begin{thm}\label{main}
Let $\alpha \in (0,2]$ and $A$ be sectorial of angle $\pi -
\frac{\alpha}{2}\pi$ on a Banach space $X$, and let $0<\gamma<2$.

(a) For each $\beta\in(0,\frac{2\pi-\pi\gamma}{2\pi-\pi\alpha})$,
$-A^\beta\in\mathcal{A}_\gamma(\varphi_0)$ with
$\varphi_0=\min\{\frac{\pi}{2},-\frac{\beta}{\gamma}(\pi-\frac{\pi}{2}\alpha)+\frac
{\pi}{\gamma}-\frac{\pi}{2}\}$.

(b) If $0 \in \rho(A)$, then the $\gamma$-times resolvent family
generated by $-A^\beta$, $S_\gamma^\beta$, can be represented by
\begin{eqnarray}\label{3.2}
\begin{split}
S_\gamma^\beta(t)=\frac{1}{2\pi
i}\int_{\Gamma_\omega}E_\gamma(-\mu^\beta
t^\gamma)(A-\mu)^{-1}d\mu,\quad t
>0,
\end{split}
\end{eqnarray}
where $\Gamma_\omega$ is a smooth path in the resolvent of $A$ from
$\infty e^{-i\omega}$ to $\infty e^{i\omega}$, avoiding the negative
axis and zero, with $\omega \in(\pi- {\alpha \over 2}\pi,
\frac{1}{\beta}(\pi-\frac{\gamma}{2}\pi))$.

(c) If in addition $-A$ generates a bounded $\alpha$-times resolvent
family $S_\alpha$, then the following generalized subordination
principle
\begin{eqnarray}\label{subordination principle}
\begin{split}
S_\gamma^\beta(t)x = \int_0^\infty f_{\alpha,\gamma}^\beta (t,s)
S_\alpha (s)xds, \quad t>0,
\end{split}
\end{eqnarray}
holds for $x \in X$, where
\begin{eqnarray}\label{subordination principle f}
f_{\alpha, \gamma}^\beta(t,s) = \frac{1}{2\pi i} \int_{\partial
\Sigma_{\omega}} E_\gamma(-\mu^\beta t^\gamma)(-\mu)^{1/\alpha -1}
e^{-(-\mu)^{1/\alpha} s} d\mu
\end{eqnarray}
with $\omega$ as in (b), $\partial \Sigma_\omega$ is the two rays
$\{\rho e^{\pm i \omega}: \rho \ge 0\}$ and $(-\rho e^{\pm i
\omega})^{1/\alpha} = \rho^{1/\alpha} e^{\mp i
(\pi-\omega)/\alpha}$.
\end{thm}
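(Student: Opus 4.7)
The strategy is to deduce (a) from the functional-calculus facts for sectorial operators, derive the contour representation in (b) from the integral formula (\ref{3.1}) by inserting a Dunford-type expression for $(\lambda^\gamma + A^\beta)^{-1}$, and obtain (c) by replacing the resolvent of $A$ inside (\ref{3.2}) with its Laplace integral coming from $S_\alpha$. For (a), by hypothesis $A$ is sectorial of angle $\omega_A := \pi - \pi\alpha/2$, and the admissible range satisfies $\beta < (2-\gamma)/(2-\alpha) < 2/(2-\alpha) = \pi/\omega_A$. Lemma \ref{Sectorial operator lem}(e) then yields that $A^\beta$ is sectorial of angle $\beta\omega_A$. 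A direct rewriting shows the bound $\beta < (2\pi - \pi\gamma)/(2\pi - \pi\alpha)$ is equivalent to $\beta\omega_A < \pi - \pi\gamma/2$, which by Lemma \ref{analytic criteria}(c) is exactly the condition $-A^\beta \in \mathcal{A}_\gamma(\varphi_0)$ with $\varphi_0 = (\pi - \beta\omega_A)/\gamma - \pi/2$, capped at $\pi/2$.

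For (b), I would substitute into (\ref{3.1}) the identity
\begin{equation*}
(\lambda^\gamma + A^\beta)^{-1} = \frac{1}{2\pi i}\int_{\Gamma_\omega}\frac{1}{\lambda^\gamma + \mu^\beta}(A - \mu)^{-1}\, d\mu,
\end{equation*}
coming from the Dunford functional calculus for $A$ applied to $f(z) = (\lambda^\gamma + z^\beta)^{-1}$ (the orientation of $\Gamma_\omega$ places $\sigma(A)$ on its right, which accounts for the $(A-\mu)^{-1}$ instead of $(\mu-A)^{-1}$). An application of Fubini then isolates the inner $\lambda$-integral, which by Bromwich inversion of (\ref{Elaplace}) equals $E_\gamma(-\mu^\beta t^\gamma)$, yielding (\ref{3.2}).

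For (c), I would start from (\ref{3.2}) and invoke Lemma \ref{Laplace transform} applied to the generator $-A$ of $S_\alpha$: setting $\lambda = (-\mu)^{1/\alpha}$, which satisfies $\RE\lambda > 0$ along $\Gamma_\omega$ precisely because $\omega > \pi - \pi\alpha/2$, gives
\begin{equation*}
(A - \mu)^{-1} = (-\mu)^{1/\alpha - 1}\int_0^\infty e^{-(-\mu)^{1/\alpha} s} S_\alpha(s)\, ds.
\end{equation*}
Plugging this into (\ref{3.2}), interchanging the order of integration, and collapsing $\Gamma_\omega$ onto the two rays $\partial\Sigma_\omega$ (a deformation that is legitimate since the resulting integrand is holomorphic in $\mu \neq 0$ and the local singularity $|\mu|^{1/\alpha - 1}$ at the origin is integrable) produces $f_{\alpha,\gamma}^\beta(t,s)$ in the advertised form.

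The principal technical hurdle is to justify the Fubini interchanges and the contour deformations. This rests on the asymptotic regimes of the Mittag-Leffler function in (\ref{halfalpha})--(\ref{Eestimate}): the choice $\omega \in (\pi - \pi\alpha/2,\, (\pi - \pi\gamma/2)/\beta)$ ensures $|\arg(-\mu^\beta t^\gamma)| > (1 - \gamma/2)\pi$ along $\Gamma_\omega$, giving the polynomial decay $|E_\gamma(-\mu^\beta t^\gamma)| = O(|\mu|^{-\beta})$. Combined with the sectorial bound $\|(A - \mu)^{-1}\| \le C/|\mu|$ and, in (c), with $\|S_\alpha(s)\| \le M$ together with the strict positivity of $\RE(-\mu)^{1/\alpha}$, absolute convergence of every iterated integral follows. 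When $0 \notin \rho(A)$, one first works with $A + \eps$, which is uniformly sectorial by Lemma \ref{Sectorial operator lem}(f), applies (b) to it, and lets $\eps \to 0^+$ to obtain (b), and thereby (c), in full generality.
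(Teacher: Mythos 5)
Your treatment of (a), of (b), and of (c) under the additional hypothesis $0\in\rho(A)$ coincides with the paper's proof: sectoriality of $A^\beta$ of angle $\beta(\pi-\pi\alpha/2)$ via Lemma \ref{Sectorial operator lem}(e) combined with Lemma \ref{analytic criteria}; insertion of the Dunford integral for $(\lambda^\gamma+A^\beta)^{-1}$ into (\ref{3.1}), Fubini, and recognition of the inner $\lambda$-integral as $E_\gamma(-\mu^\beta t^\gamma)$; then substitution of the Laplace representation of $(A-\mu)^{-1}$ along the two rays and collapse of the small arc. (Minor slip: the algebraic-decay regime (\ref{Eestimate}) for $E_\gamma(-\mu^\beta t^\gamma)$ is governed by $|\arg(\mu^\beta t^\gamma)|<(1-\gamma/2)\pi$, i.e.\ $\beta\omega<\pi-\pi\gamma/2$; the inequality you wrote points the wrong way, although your choice of $\omega$ does place you in the correct regime.)

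The genuine gap is your last sentence, which disposes of the case $0\notin\rho(A)$ in part (c) by ``working with $A+\eps$ and letting $\eps\to0^+$.'' To run your first-step argument for $A+\eps$ you need two facts that do not follow from $-A\in\mathcal{C}_\alpha(0)$ together with uniform sectoriality of $(A+\eps)_{\eps\ge0}$: first, that $-(A+\eps)$ generates a bounded $\alpha$-times resolvent family $S_\alpha^{(\eps)}$ with bounds uniform in $\eps$ (by Remark \ref{rem-anacri}(b) sectoriality of angle $\pi-\pi\alpha/2$ is only a \emph{necessary} condition for generation, so this is not automatic); and second, that $S_\alpha^{(\eps)}(s)\to S_\alpha(s)$ strongly, which is needed to pass to the limit under the integral $\int_0^\infty f^\beta_{\alpha,\gamma}(t,s)S_\alpha^{(\eps)}(s)x\,ds$ and requires an approximation theorem of Trotter--Kato type for fractional resolvent families. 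The paper resolves this by splitting the reduction in two: it first treats the case where $S_\alpha$ is bounded \emph{analytic}, so that $A$ (hence $A+\eps$, uniformly) is sectorial of angle strictly smaller than $\pi-\pi\alpha/2$ and the $\eps$-shift argument goes through with Theorem 4.2 of \cite{LZ} supplying the convergence $S_\alpha^{(\eps)}\to S_\alpha$; it then handles a merely bounded $S_\alpha$ by writing the subordination formula with $\alpha'<\alpha$ in place of $\alpha$ (where analyticity is automatic by Lemma \ref{subordinate} or part (a)) and letting $\alpha'\uparrow\alpha$ using Theorem 4.5 of \cite{LZ} and the convergence $f^\beta_{\alpha',\gamma}\to f^\beta_{\alpha,\gamma}$. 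Without some version of these two additional steps, your argument establishes (c) only for invertible $A$ or for analytic $S_\alpha$.
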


\begin{proof}
(a) Since $A$ is sectorial of angle $\pi-\frac{\pi}{2}\alpha$, by
Lemma \ref{Sectorial operator lem} $(e)$, $A^{\beta}$ is sectorial
of angle $\beta(\pi-\frac{\pi}{2}\alpha)$ for $\beta \in (0,
\frac{2\pi}{2\pi-\pi\alpha})$. By Lemma \ref{analytic criteria},
$-A^\beta \in \mathcal{A}_\gamma (\varphi_0)$ if and only if
$A^\beta \in {\rm Sect}(\pi - ({\pi\over 2} + \varphi_0)\gamma)$. To
guarantee that $\varphi_0
>0$, we need $\beta < \frac{2\pi - \pi \gamma}{2\pi - \pi\alpha}$.

(b) Since $A^\beta \in $ Sect$(\beta(\pi-\frac{\pi}{2}\alpha))$,
$\rho(A^\beta) \supset \Complex -
\Sigma_{\beta(\pi-\frac{\pi}{2}\alpha)}$. Thus $(\lambda +
A^\beta)^{-1}$ exists and belongs to $B(X)$ for $\lambda \in
\Sigma_{\pi - \beta(\pi-\frac{\pi}{2}\alpha)}$. Let $ \omega> \pi -
\frac{\pi}{2}\alpha$. Since $0\in \rho(A)$, we can find $d>0$ such
that $\{z \in \Complex: |z|< d\} \subset \rho(A)$ and then choose
$\Gamma_\omega$ as the union of $\Gamma_\omega^1$, $\Gamma_\omega^2$
and $\Gamma_\omega^3$, where
\begin{eqnarray*}
\Gamma_\omega^1&=& \{re^{i\omega}:\, r>d\},\\
\Gamma_\omega^2&=& \{de^{i\theta}:\, -\omega < \theta< \omega\},\\
\Gamma_\omega^3&=& \{re^{-i\omega}:\, r>d\}.\\
\end{eqnarray*}
For $\lambda \in \Sigma_{
\pi-\beta \omega}$, the function $f(\mu) =\frac{1}{\lambda +
\mu^\beta}$ is analytic on $\Sigma_{\omega}$, we can therefore
define a bounded operator $f(A)$ as
$$
f(A) = \frac{1}{2\pi i} \int_{\Gamma_\omega} f(\mu)(A-\mu)^{-1}d\mu.
$$
Since $ \beta(\pi-\omega) < \beta(\pi- \frac{\pi}{2}\alpha)$,
$(\lambda + A^\beta)^{-1} \in B(X)$ for $\lambda \in
\Sigma_{\pi-\beta \omega}$. It is routine to show that for such
$\lambda \in \Sigma_{\pi-\beta \omega}$,
\begin{equation}\label{Abeta}
(\lambda + A^\beta)^{-1} = f(A) = \frac{1}{2\pi i}
\int_{\Gamma_\omega} (\lambda + \mu^\beta)^{-1} (A-\mu)^{-1}d\mu.
\end{equation}
Now take $\beta$ and $\varphi_0$ as in (a). Since $-A^\beta \in
\mathcal{A}_\gamma(\varphi_0)$, for $0< \delta < \varphi_0$, choose
$\omega<  \frac{1}{\beta}[\pi - (\frac{\pi}{2} + \delta)\gamma]$
such that when $\lambda \in \Gamma_{\frac{\pi}{2}+ \delta}$ then
$\lambda^\gamma \in \Sigma_{ \pi-\beta \omega}$. Thus by
(\ref{3.1}), (\ref{Abeta}) and Fubini's theorem we have
\begin{eqnarray*}
S_\gamma^\beta(t) &=& \frac{1}{2\pi i} \int_{\Gamma_{\frac{\pi}{2}+
\delta} }e^{\lambda t}
\lambda^{\gamma -1} (\lambda^\gamma+A^\beta)^{-1} d\lambda \\
&=& \frac{1}{2\pi i} \int_{\Gamma_{\frac{\pi}{2}+ \delta} }
e^{\lambda t }\lambda^{\gamma-1}\Big(\frac{1}{2\pi i}
\int_{\Gamma_\omega} (\lambda^\gamma + \mu^\beta)^{-1} (A-\mu)^{-1}
d\mu \Big)d\lambda
\\
&=&\frac{1}{2\pi i} \int_{\Gamma_\omega} \Big(\frac{1}{2\pi
i}\int_{\Gamma_{\frac{\pi}{2}+ \delta} }e^{\lambda t
}\lambda^{\gamma-1} (\lambda^\gamma + \mu^\beta)^{-1}d\lambda
\Big)(A-\mu)^{-1} d\mu
\\
&=&\frac{1}{2\pi i} \int_{\Gamma_\omega} E_\gamma(-\mu^\beta
t^\gamma)(A-\mu)^{-1} d\mu.
\end{eqnarray*}

(c) We first assume that $0 \in \rho(A)$ and $A \in
\mathcal{C}_\alpha(0)$. Let $\Gamma_\omega$ be as in (b). By (b),
(\ref{Laplace}) and Fubini's theorem, for $x \in X$,
\begin{eqnarray*}
S_\gamma^\beta(t)x &=&\frac{1}{2\pi i} \int_{\Gamma_\omega}
E_\gamma(-\mu^\beta t^\gamma)(A-\mu)^{-1}x d\mu\\
&=& \frac{1}{2\pi i} \int_{\Gamma_\omega^1\cup\Gamma_\omega^3}
E_\gamma(-\mu^\beta t^\gamma)\Big[-(-\mu)^{1/\alpha-1} \int_0^\infty
e^{-(-\mu)^{1/\alpha} s} S_\alpha(s)x ds\Big] d\mu\\
&\mbox{}&\quad +\frac{1}{2\pi i} \int_{\Gamma_\omega^2}
E_\gamma(-\mu^\beta t^\gamma)(A-\mu)^{-1}x d\mu \\
 &=&\int_0^\infty\Big[-\frac{1}{2\pi i}
\int_{\Gamma_{\omega}^1 \cup \Gamma_{\omega}^3} E_\gamma(-\mu^\beta
t^\gamma)(-\mu)^{1/\alpha-1} e^{-(-\mu)^{1/\alpha} s}d\mu
\Big] S_\alpha(s)xds\\
&\mbox{}& \quad +\frac{1}{2\pi i} \int_{\Gamma_\omega^2}
E_\gamma(-\mu^\beta t^\gamma)(A-\mu)^{-1} d\mu
\end{eqnarray*}
The integration on $\Gamma_\omega^2$ converges to 0 if $d \to 0$
since $0 \in \rho(A)$. Moreover, since $|\arg(\mu^\beta t^\gamma)| <
\pi- \frac{\gamma}{2}\pi$, by (\ref{Eestimate}) the integration on
$\Gamma_\omega^1\cup \Gamma_\omega^3$ is absolutely convergent if
$d\to 0$. So letting $d \to 0$, we get $S_\gamma^\beta(t)x =
\int_0^\infty f_{\alpha,\gamma}^\beta(t,s)S_\alpha(s)x ds$ with
\begin{eqnarray*}
f_{\alpha, \gamma}^\beta(t,s) =-\frac{1}{2\pi i} \int_{\partial
\Sigma_\omega} E_\gamma(-\mu^\beta t^\gamma)(-\mu)^{1-\alpha\over
\alpha} e^{-(-\mu)^{1\over\alpha} s}   d\mu.
\end{eqnarray*}
Thus (\ref{subordination
principle}) holds for $-A\in \mathcal{C}_\alpha(0)$ with $0 \in
\rho(A)$.

Next we show that (\ref{subordination principle}) holds when $0
\not\in \rho(A)$ and $-A$ generates a bounded analytic
$\alpha$-times resolvent family. For $\eps >0$, $0 \in \rho(A+\eps)$
and $(A+\eps)_{\eps \ge 0}$ is uniformly sectorial of angle $\pi -
\frac{\alpha}{2}\pi$ by Lemma \ref{Sectorial operator lem}(f). By
(b), the $\gamma$-times resolvent family, $\mbox{}_\eps
S^\beta_\gamma$, generated by $-(A+\eps)^\beta$ is given by
\begin{equation}\label{Seps}
\mbox{}_\eps S_\gamma^\beta(t)=\frac{1}{2\pi
i}\int_{\Gamma_\omega}E_\gamma(-\mu^\beta
t^\gamma)(A+\eps-\mu)^{-1}d\mu,\quad t
>0,
\end{equation}
since $(A+\eps -\mu)^{-1} \to (A-\mu)^{-1}$ as $\eps \to 0$, by
(\ref{Eestimate}) and Lebesgue's dominated convergence theorem
$\mbox{}_\eps S_\gamma^\beta(t)\to S_\gamma^\beta(t)$ as $\eps \to
0$ for every $t \ge 0$. On the other hand, by the first step we can
represent $\mbox{}_\eps S_\gamma^\beta(t)$ by
\begin{equation}\label{Seps2}
\mbox{}_\eps S_\gamma^\beta(t)x = \int_0^\infty
f_{\alpha,\gamma}^\beta (t,s) \mbox{}_\eps S_\alpha (s)xds, \quad
t>0,
\end{equation}
where $\mbox{}_\eps S_\alpha (s)$ is the $\alpha$-times resolvent
family generated by $-(A+\eps)$. Since $\mbox{}_\eps S_\alpha (s)$
is uniformly bounded and $(A+\eps -\mu)^{-1} \to (A-\mu)^{-1}$ as
$\eps \to 0$, by the approximation theorem for $\alpha$-times
resolvent family (Theorem 4.2 in \cite{LZ}) one has $\mbox{}_\eps
S_\alpha (s) \to S_\alpha (s)$ in strong sense for every $s \ge 0$.
Note that $f_{\alpha,\gamma}^\beta(t,\cdot)$ is absolutely
integrable by (\ref{Eestimate}), by letting $\eps$ to 0 in
(\ref{Seps2}) we obtain (\ref{subordination principle}).

Finally we show that (\ref{subordination principle}) holds when $0
\not\in \rho(A)$ and $-A \in \mathcal{C}_\alpha(0)$. For every
$\alpha'< \alpha$, $-A$ generates a bounded analytic $\alpha'$-times
resolvent family by (a) or Lemma \ref{subordinate}, so by our second
step we have for every $x \in X$,
$$
S_\gamma^\beta(t)x = \int_0^\infty f_{\alpha',\gamma}^\beta (t,s)
S_{\alpha'} (s)xds, \quad t>0,
$$
where $S_{\alpha'}$ is the $\alpha'$-times resolvent family
generated by $-A$. Since $S_{\alpha'}(t) \to S_\alpha(t)$ strongly
by Theorem 4.5 in \cite{LZ} and $f_{\alpha',\gamma}^\beta(t,s) \to
f_{\alpha,\gamma}^\beta(t,s)$, (\ref{subordination principle}) is
obtained by letting $\alpha'$ to $\alpha$.
\end{proof}

\begin{rem}\label{main-rem}
(a) Note that by Remark \ref{rem-anacri} (b), if $A \in
\mathcal{C}_\alpha(0)$, then $A$ is sectorial of angle $\pi - \alpha
\pi/2$.

(b) If $\alpha=1$, we can shift the contour in (\ref{subordination
principle f}) to $\Gamma_{\omega}$, that is,
$$
f_{1,\gamma}^\beta(t,s) =\frac{1}{2\pi i} \int_{\Gamma_\omega}
E_\gamma(-\mu^\beta t^\gamma) e^{\mu s} d\mu.
$$

(c) If $\beta=1$, then in the proof of (b) we do not need the
assumption that $0 \in \rho(A)$. Indeed, in this case we can replace
the contour $\Gamma_\omega$ by
$\tilde{\Gamma}_\omega:=\Gamma_\omega^1 \cup \Gamma_\omega^3 \cup
\tilde{\Gamma}_\omega^2$, where $\tilde{\Gamma}_\omega^2 =
\{de^{i\theta}:\, \omega < \theta< 2\pi-\omega\}$, and then
$$
S_\gamma^1(t)=\frac{1}{2\pi
i}\int_{\tilde{\Gamma}_\omega}E_\gamma(-\mu
t^\gamma)(A-\mu)^{-1}d\mu,\quad t
>0,
$$
and
$$
f_{\alpha, \gamma}^1(t,s) =-\frac{1}{2\pi i}
\int_{\tilde{\Gamma}_\omega} E_\gamma(-\mu
t^\gamma)(-\mu)^{1-\alpha\over \alpha} e^{-(-\mu)^{1\over\alpha} s}
d\mu.
$$
In particular, if $(A,S_\alpha)\in\mathcal{A}_\alpha(\theta_0)$ with
$\theta_0
>0$, then for each $\theta\in(0,\theta_0)$ and $z\in\Sigma_\theta$,
$S_\alpha(z)$ has the following integrated representation:
\begin{eqnarray}\label{3.15}
\begin{split}
S_\alpha(z)=\frac{1}{2\pi i}\int_{\tilde{\Gamma}_\omega}E_\alpha(\mu
z^\alpha)(\mu-A)^{-1}d\mu,
\end{split}
\end{eqnarray}
where $\theta\in(\pi\alpha/2, (\pi/2+\theta_0)\alpha)$. Note that
(\ref{3.15}) is a Dunford integral, sometimes it will be more
convenient than the identity (\ref{3.1}).

(d) If $\gamma=1$, by changing the variable $\mu$ in
(\ref{subordination principle f}) to $\rho e^{i \omega}$ and $\rho
e^{-i \omega}$, $0< \rho< \infty$, one gets
\begin{eqnarray*}
f_{\alpha,1}^\beta(t,s) &=& \frac{1}{\pi }\int_0^\infty
\rho^{({1-\alpha})/{\alpha}}\exp\Big(-s\rho^{1/\alpha}\cos(\pi-\omega)/{\alpha}-t\rho^\beta\sin\beta \omega \Big)\\
&\mbox{}& \quad \cdot \sin\Big(t\rho^\beta \sin \beta \omega
-s\rho^{1/\alpha}\sin({\pi-\omega})/{\alpha} +
({\pi-\omega})/{\alpha})\Big) d\rho.
\end{eqnarray*}

\end{rem}

 As consequences of Theorem
\ref{main} and Remark \ref{main-rem} we have the following results,
which give positive answers to questions $(Q_1)-(Q_4)$.

\begin{cor}\label{bounded-cor}
The following assertions hold.

(a) If $(-A,S_1) \in \mathcal{C}_1(0)$ then
$-A^\alpha\in\mathcal{A}_1(\frac{\pi}{2}(1-\alpha))$ for each
$\alpha\in(0,1)$. Moreover, the $C_0$-semigroup generated by
$-A^{\alpha}$ is given by
$$
 \int_0^\infty p_\alpha (t,s) S_1 (s)ds,\quad t >0
$$
where $\widehat{p_\alpha (t,\cdot)}(\lambda):= \int_0^\infty
e^{-\lambda s} p_\alpha(t, s)ds= e^{-\lambda^\alpha t}$ and
\begin{equation}\label{Yosida}
\begin{split}
p_\alpha(t,s) &= \frac{1}{\pi}  \int_0^\infty \exp(s \rho
\cos\theta-t\rho^\alpha \cos\alpha\theta) \cdot \sin(s \rho
\sin\theta\\
&\mbox{}\quad - t\rho \sin\alpha\theta + \theta)d\rho,
\end{split}
\end{equation}
for $\pi/2 < \theta <\pi$.

 (b) If $(-A,S_\alpha) \in \mathcal{C}_\alpha(0)$ then
$-A\in\mathcal{A}_\beta(\min\{\pi,(\alpha/\beta-1)\pi/2\})$ for each
$\beta\in(0,\alpha)$. Moreover, the $\beta$-times resolvent family
generated by $-A$ is given by
$$
 \int_0^\infty \varphi_{\beta/\alpha} (t,s) S_\alpha(s)ds, \quad t
 >0
$$
where $\widehat{\varphi_\gamma (\cdot,s)}(\lambda)=
\lambda^{\gamma-1} e^{-\lambda^\gamma s}$,
$\widehat{\varphi_\gamma(t, \cdot)} (\lambda) = E_\gamma (-\lambda
t^\gamma)$ for $0<\gamma <1$ and
\begin{eqnarray*}
\varphi_{\gamma}(t,s)&=&
 \frac{1}{\pi}\int_0^\infty \rho^{\gamma -1} \exp\Big(-s\rho^\gamma \cos\gamma({\pi-\theta})-t\rho
 \cos{\theta}\Big)\\
 &\mbox{}&\quad \cdot \sin\Big(t\rho \sin{\theta}-s\rho^\gamma
 \sin\gamma({\pi-\theta}) +\gamma({\pi-\theta})\Big)d\rho
\end{eqnarray*}
for $\theta \in (\pi- \frac{\pi}{2\gamma}, \pi/2)$.

(c) If $(-A,S_1) \in \mathcal{C}_1(0)$ then
$-A^\alpha\in\mathcal{A}_\alpha(\min\{\frac{\pi}{\alpha}-\pi,\pi/2\})$
for each $\alpha\in(0,1)$. Moreover, the $\alpha$-times resolvent
family generated by $-A^{\alpha}$ is given by
\begin{eqnarray*}
\begin{split}
\int_0^\infty f_{1,\alpha}^\alpha (t,s) S_1 (s)ds, \quad t>0
\end{split}
\end{eqnarray*}
where $\widehat{f_{1,\alpha}^\alpha(t,\cdot)}(\lambda)=
E_\alpha(-\lambda^\alpha t^\alpha)$ and $f_{1,\alpha}^\alpha(t,s) =
\int_0^\infty \varphi_\alpha(t,\tau)p_\alpha(\tau,s)d\tau$.

(d) If $(-A,S_\alpha) \in \mathcal{C}_\alpha(0)$ for some $\alpha
\in (1,2]$ then $-A^{1/\alpha}\in\mathcal{A}_1(\pi - \pi/\alpha)$.
Moreover, the $C_0$-semigroup generated by $-A^{1/\alpha}$ is given
by
\begin{eqnarray*}
\begin{split}
\int_0^\infty f_{\alpha,1}^{1/\alpha} (t,s) S_\alpha (s)ds, \quad
t>0
\end{split}
\end{eqnarray*}
where
\begin{eqnarray*}
f_{\alpha,1}^{1/\alpha}(t,s)&=&
 \frac{\alpha}{\pi}\int_0^\infty  \exp\Big(-s\rho \cos({\pi-\theta})/{\alpha}-t\rho
 \cos{\theta}/{\alpha}\Big)\\
 &\mbox{}&\quad \cdot \sin\Big(t\rho \sin{\theta}/{\alpha}-t\rho
 \sin({\pi-\theta})/{\alpha} +({\pi-\theta})/{\alpha}\Big)d\rho
\end{eqnarray*}
for $\theta \in (\pi- \frac{\alpha \pi}{2}, \alpha \pi/2)$ and
$f_{\alpha,1}^{1/\alpha}(t,s)= \int_0^\infty
 p_{1/\alpha}(t,\tau)\varphi_{1/\alpha} (\tau,s)d\tau$.

 (e) If $(-A,S_\alpha) \in \mathcal{C}_\alpha(0)$ for some
$\alpha \in (0,2]$ then $-A^{1/2} \in
\mathcal{A}_{\alpha/2}(\pi/2)$. Moreover, the $\alpha/2$-times
resolvent family generated by $-A^{1/2}$ is given by
\begin{eqnarray}\label{subordination, 1/2}
\begin{split}
\frac{\alpha}{\pi} t^{ \frac{\alpha}{2} } \int_0^\infty
\frac{s^{\frac{\alpha}{2} - 1}}{s^\alpha + t^\alpha} S_\alpha (s)ds,
\quad t>0.
\end{split}
\end{eqnarray}

(f) If $(-A,S_\alpha) \in \mathcal{C}_\alpha(0)$ for some $\alpha
\in (0,2)$ then $-A^{\beta} \in
\mathcal{A}_{\alpha}(\min\{(\frac{\pi}{\alpha}-\frac{\pi}{2})(1-\beta),
\pi/2\})$ for $\beta \in (0,1)$.
\end{cor}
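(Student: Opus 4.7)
The plan is to deduce each of the six assertions (a)--(f) by specializing Theorem \ref{main}: in each case I choose a triple of parameters to plug into the theorem, verify admissibility, read off the angle, and then rewrite the kernel $f$ explicitly using the appropriate part of Remark \ref{main-rem}.

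Writing $(\alpha_*,\gamma_*,\beta_*)$ for the triple fed into Theorem \ref{main}, the six cases correspond respectively to $(1,1,\alpha)$, $(\alpha,\beta,1)$, $(1,\alpha,\alpha)$, $(\alpha,1,1/\alpha)$, $(\alpha,\alpha/2,1/2)$, and $(\alpha,\alpha,\beta)$. In each case the admissibility inequality $\beta_*<(2\pi-\pi\gamma_*)/(2\pi-\pi\alpha_*)$ reduces after elementary algebra to the hypothesis stated in the corollary ($\alpha<1$ for (a) and (c), $\beta<\alpha$ for (b), $\alpha>1$ for (d), $\beta<1$ for (f); in (e) it is automatic for $\alpha\in(0,2]$). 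Plugging into $\varphi_0=\min\{\pi/2,\,-(\beta_*/\gamma_*)(\pi-\pi\alpha_*/2)+\pi/\gamma_*-\pi/2\}$ and simplifying produces exactly the angles stated.

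The explicit kernel representations in (a), (b), and (d) come straight from Remark \ref{main-rem}: part (d) of that remark (which handles $\gamma_*=1$) yields the Yosida integral (\ref{Yosida}) in case (a) and the displayed formula for $f_{\alpha,1}^{1/\alpha}$ in case (d), while part (c) of the remark (which handles $\beta_*=1$) gives the integral representation of $\varphi_\gamma(t,s)$ in case (b), with the contour $\tilde{\Gamma}_\omega$ parametrized by the two rays $\rho e^{\pm i(\pi-\theta)}$. The claimed Laplace transforms $\widehat{p_\alpha(t,\cdot)}(\lambda)=e^{-\lambda^\alpha t}$, $\widehat{\varphi_\gamma(\cdot,s)}(\lambda)=\lambda^{\gamma-1}e^{-\lambda^\gamma s}$ and $\widehat{\varphi_\gamma(t,\cdot)}(\lambda)=E_\gamma(-\lambda t^\gamma)$ are immediate from Lemma \ref{Laplace transform} applied to the corresponding resolvent families together with the Mittag--Leffler identity (\ref{Elaplace}). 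The composition formulas $f_{1,\alpha}^\alpha(t,s)=\int_0^\infty\varphi_\alpha(t,\tau)p_\alpha(\tau,s)d\tau$ in (c) and $f_{\alpha,1}^{1/\alpha}(t,s)=\int_0^\infty p_{1/\alpha}(t,\tau)\varphi_{1/\alpha}(\tau,s)d\tau$ in (d) are obtained by chaining two subordinations: in (c), first apply (a) to obtain the semigroup generated by $-A^\alpha$ from $S_1$, then apply Lemma \ref{subordinate} to pass from that semigroup to its $\alpha$-times resolvent family; in (d), first apply (b) to subordinate $S_\alpha$ down to the semigroup generated by $-A$ (possible because $\alpha>1$), then apply (a) to get the semigroup generated by $-A^{1/\alpha}$. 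In both cases Fubini on the iterated integral yields the advertised kernel identity. Part (f) requires no work beyond the angle calculation.

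The one genuine analytic step is the closed-form evaluation in (e). Here $f_{\alpha,\alpha/2}^{1/2}$ is not of the type directly handled by either Remark \ref{main-rem}(c) or \ref{main-rem}(d), so my plan is to start from the contour representation (\ref{subordination principle f}), parametrize the two rays of $\partial\Sigma_\omega$ as $\rho e^{\pm i\omega}$, combine the conjugate branches to obtain a single real integral, and use the asymptotic bound (\ref{Eestimate}) to justify the limit $\omega\to\pi$. The resulting one-dimensional integral should reduce to the standard evaluation $\int_0^\infty u^{p-1}/(1+u^q)\,du=(\pi/q)/\sin(p\pi/q)$, producing the Cauchy-type kernel $(\alpha t^{\alpha/2}/\pi)\,s^{\alpha/2-1}/(s^\alpha+t^\alpha)$; at $\alpha=2$ this recovers the Poisson kernel, as expected. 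This evaluation is the main obstacle; everything else is routine bookkeeping.
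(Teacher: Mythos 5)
Your reduction of all six parts to Theorem \ref{main} and Remark \ref{main-rem} via the parameter triples $(1,1,\alpha)$, $(\alpha,\beta,1)$, $(1,\alpha,\alpha)$, $(\alpha,1,1/\alpha)$, $(\alpha,\alpha/2,1/2)$, $(\alpha,\alpha,\beta)$ is exactly the paper's strategy, and your admissibility and angle computations check out. Parts (a), (c), (d), (f) are handled essentially as in the paper, which likewise chains the two subordinations in (c) and (d) and identifies the composed kernels through their Laplace transforms; in (b) the paper additionally proves the identification $f_{\alpha,\beta}^{1}=\varphi_{\beta/\alpha}$ by inserting the contour representation (\ref{Mittag}) of $E_\beta$ into Remark \ref{main-rem}(c) and applying Fubini and Cauchy's formula, which is what your Laplace-transform identification accomplishes by uniqueness.

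The genuine divergence is part (e), and there your plan has a concrete flaw. The paper never evaluates the contour kernel $f_{\alpha,\alpha/2}^{1/2}$; instead it invokes the Balakrishnan-type identity (\ref{Bafdd}) from \cite{MS} for $(\lambda^{\alpha/2}+A^{1/2})^{-1}$, substitutes the Laplace representation of $(\mu+A)^{-1}$ in terms of $S_\alpha$, and finishes with Fubini and an inverse Laplace transform. Your alternative --- collapse $\partial\Sigma_\omega$ in (\ref{subordination principle f}) onto the negative axis and evaluate directly --- can be completed, but not by the Beta-type integral $\int_0^\infty u^{p-1}/(1+u^q)\,du$ you cite. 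After combining the conjugate rays and substituting $\nu=\rho^{1/\alpha}$ one is left with
$\frac{\alpha}{\pi}\int_0^\infty e^{-\nu s}\operatorname{Im}E_{\alpha/2}\bigl(i(\nu t)^{\alpha/2}\bigr)\,d\nu$,
and the closed form follows from the Mittag--Leffler Laplace identity (\ref{Elaplace}) continued to the purely imaginary parameter $\omega=it^{\alpha/2}$ (legitimate by (\ref{Eestimate}) since $|\arg(\mp i)|=\pi/2<(1-\alpha/4)\pi$), whose imaginary part is precisely $s^{\alpha/2-1}t^{\alpha/2}/(s^\alpha+t^\alpha)$, giving (\ref{subordination, 1/2}). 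So replace the ``standard evaluation'' step by this Mittag--Leffler Laplace transform; with that substitution your route is a self-contained alternative to the paper's appeal to \cite{MS}, at the cost of a slightly longer computation.
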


\begin{proof}
(a) follows from Remark \ref{main-rem} (a), (b) and (d).

 (b) By Remark
\ref{main-rem} (c), (\ref{Mittag}), Fubini's theorem and Cauchy's
integral formula,
\begin{eqnarray*}
f_{\alpha, \beta}^1(t,s) &=&-\frac{1}{2\pi i}
\int_{\tilde{\Gamma}_\omega} E_\beta(-\mu
t^\beta)(-\mu)^{1-\alpha\over \alpha} e^{-(-\mu)^{1\over\alpha} s}
d\mu\\
&= &\int_{\tilde{\Gamma}_\omega}\Big( \frac{1}{2\pi i}
\int_{\Gamma_{{\pi\over 2}+\delta}} e^{\lambda t}\lambda^{\beta -1}
(\lambda ^\beta +\mu)^{-1} d\lambda \Big)(-\mu)^{1-\alpha\over
\alpha} e^{-(-\mu)^{1\over\alpha} s}
d\mu\\
&=&\frac{1}{2\pi i} \int_{\Gamma_{{\pi\over 2}+\delta}}e^{\lambda
t}\lambda^{\beta -1} \Big(\int_{\tilde{\Gamma}_\omega} (\lambda
^\beta +\mu)^{-1} (-\mu)^{1-\alpha\over \alpha}
e^{-(-\mu)^{1\over\alpha} s}
d\mu\Big)d\lambda \\
&=& \frac{1}{2\pi i} \int_{\Gamma_{{\pi\over 2}+\delta}} e^{\lambda
t}\lambda^{\beta -1}
 \lambda^{\beta(\frac{1-\alpha}{\alpha})} e^{-\lambda^{\beta/\alpha}
 s}d\lambda\\
&=&\varphi_{\beta/\alpha}(t,s).
\end{eqnarray*}
And the last identity follows from Remark \ref{main-rem} (d) and by
noting that $\varphi_\gamma(t,s) = f_{1/\gamma,1}^1(t,s)$.

 (c) By Remark \ref{main-rem} (b), for $\lambda >0$,
\begin{eqnarray*}
\int_0^\infty e^{-\lambda
s}f_{1,\alpha}^\alpha(t,s)ds&=&\int_0^\infty e^{-\lambda
s}\Big(\frac{1}{2\pi i}\int_{\Gamma_\omega} E_\alpha(-\mu^\alpha
t^\alpha) e^{\mu s}
d\mu\Big)ds\\
&=&\frac{1}{2\pi i} \int_{\Gamma_\omega} E_\alpha(-\mu^\alpha
t^\alpha) \Big(\int_0^\infty e^{-\lambda s}e^{\mu s}ds
\Big)d\mu\\
&=&\frac{1}{2\pi i} \int_{\Gamma_\omega} \frac{E_\alpha(-\mu^\alpha
t^\alpha)}{\lambda -\mu}d\mu = E_\alpha(-\lambda^\alpha t^\alpha)
\end{eqnarray*}
holds by Cauchy's integral formula and (\ref{Eestimate}). The last
statement follows from the calculation of the Laplace transform of
the function $\int_0^\infty
\varphi_\alpha(t,\tau)p_\alpha(\tau,\cdot)d\tau$.

(d) The representation of $f_{\alpha,1}^{1/\alpha}$ follows from
Remark \ref{main-rem} (d). By (b), the $C_0$-semigroup generated by
$-A$ is given by $T(t)= \int_0^\infty \varphi_{1/\alpha}(t,s)
S_\alpha(s) ds$; and then by (a), the $(1/\alpha)$-times resolvent
family generated by $-A^{1/\alpha}$ is given by $\int_0^\infty
p_{1/\alpha}(t,s)$ $T(s)ds$.

 (f) and the first part of (e) are immediate consequences of Theorem
\ref{main}. It remains to prove the subordination formulas
(\ref{subordination, 1/2}). Indeed, let $S_{\alpha/2}$ be the
$\alpha/2$-times resolvent family generated by $-A^{1/2}$.
Since $\alpha /2 <1$, by  (5.24) in \cite{MS},
\begin{equation}\label{Bafdd}
\bigg( \lambda^{\alpha/2} + A^{1/2} \bigg)^{-1} = \frac{1}{\pi}
\int_0^\infty \frac{\mu^{1/2}}{\mu + \lambda^\alpha}(\mu + A)^{-1}
d\mu.
\end{equation}
Therefore, it follows from (\ref{Laplace}), (\ref{3.1})
(\ref{Bafdd}), and Fubini's theorem that
\begin{eqnarray*}
S_{\alpha/2} (t) &=& \frac{1}{2\pi i} \int_{\Gamma_{\theta_0}}
e^{\lambda t} \lambda^{\alpha/2 - 1} \bigg( \lambda^{\alpha/2} +
A^{1/2} \bigg)^{-1} d\lambda\\
&=& \frac{1}{2\pi i} \int_{\Gamma_{\theta_0}} e^{\lambda t}
\lambda^{\alpha/2 - 1} \bigg( \frac{1}{\pi} \int_0^\infty
\frac{\mu^{1/2}}{\mu + \lambda^\alpha}
((\mu^{1/\alpha})^\alpha + A)^{-1} d\mu \bigg) d\lambda\\
&=& \frac{1}{2\pi i} \int_{\Gamma_{\theta_0}} e^{\lambda t}
\lambda^{\alpha/2 - 1} \bigg( \frac{1}{\pi} \int_0^\infty
\frac{\mu^{1/\alpha - 1/2}}{\mu + \lambda^\alpha}
\int_0^\infty e^{-s\mu^{1/\alpha}} S_\alpha(s) ds d\mu \bigg) d\lambda\\
&=& \frac{1}{2\pi i} \int_{\Gamma_{\theta_0}} e^{\lambda t} \bigg(
\frac{\alpha}{\pi} \int_0^\infty S_\alpha(s) ds \int_0^\infty
e^{-s\nu} \frac{\lambda^{\alpha/2 - 1} \nu^{\alpha/2}}{\nu^\alpha +
\lambda^\alpha} d\nu \bigg) d\lambda\\
&=& \frac{\alpha}{\pi} \int_0^\infty S_\alpha (s) ds \int_0^\infty
e^{- s \nu}\Big( \frac{1}{2 \pi i} \int_{\Gamma_{\theta_0}}
e^{\lambda t} \frac{ \lambda^{ \alpha/2 - 1}
\nu^{\alpha/2}}{\nu^\alpha + \lambda^\alpha} d\lambda\Big) d\nu \\
&=& \frac{\alpha}{\pi} t^{\alpha/2} \int_0^\infty
\frac{s^{\frac{\alpha}{2} - 1}}{s^\alpha + t^\alpha} S_\alpha (s)
ds, \quad t > 0.
\end{eqnarray*}

\end{proof}

\begin{rem}\label{integral representation2 cor rem}
(a) (\ref{Yosida}) is the formula (11) in \cite{Yo}. Note that
$p_{1/2}(t,s)= \frac{te^{-t^2/4s}}{2\sqrt{\pi}s^{3/2}}$ (see Lemma
1.6.7 in \cite{ABHN}).

(b) By Corollary \ref{bounded-cor} (b), we obtain the subordinate
principle (Theorem 3.1 in \cite{Baj}) for bounded fractional
resolvent families. The formula is also applied to exponentially
bounded fractional resolvent families by small modification, since
we do not need the fractional power here. By Lemma 1.6.7 in
\cite{ABHN} $\varphi_{1/2}(t,s) = \frac{e^{-s^2/4t}}{\sqrt{\pi t}}$.

(c) By Corollary \ref{bounded-cor} (e), if $A$ generates a bounded
$C_0$-semigroup $T(t)$, then the $1/2$-times resolvent family
generated by $-(-A)^{1/2}$ is given by
$\frac{\sqrt{t}}{\pi}\int_0^\infty \frac{T(s)}{(t+s)s^{1/2}}ds$.

(d) By Corollary \ref{bounded-cor} (e), if $A$ generates a bounded
cosine function $C(t)$, then the $C_0$-semigroup generated by
$-(-A)^{1/2}$ is given by $\frac{2t}{\pi}\int_0^\infty
\frac{C(s)}{t^2+s^2} ds$. See also Lemma 2.1 in \cite{Cio and Liz}.
\end{rem}

The following results for generators of analytic fractional
resolvent families can be proved similarly as the proof of Theorem
\ref{main} (a) by using Lemma \ref{analytic criteria}.

\begin{prop}\label{analytic-cor}
The following assertions hold:

(a) If $-A\in\mathcal{A}_1(\theta_0)$ for some
$\theta_0\in(0,\frac{\pi}{2}]$, then
$-A^\alpha\in\mathcal{A}_1(\frac{\pi}{2}-(\frac{\pi}{2}-\theta_0)\alpha)$
for each $\alpha\in(0,\frac{\pi}{\pi-2\theta_0})$.

(b) If $-A\in\mathcal{A}_1(\theta_0)$ for some
$\theta_0\in(0,\frac{\pi}{2}]$, then
$-A^\alpha\in\mathcal{A}_\alpha(\min\{\frac{\pi}{\alpha}+\theta_0-\pi,\pi/2\})$
for each $\alpha\in(0,\frac{\pi}{\pi-\theta_0})$.

(c) If $-A\in\mathcal{A}_\alpha(\theta_0)$ for some $\alpha \in
(0,2)$ and $\theta_0 \in (0, \min\{\frac{\pi}{2}, \frac{\pi}{\alpha}
- \frac{\pi}{2}\}]$, then
$-A\in\mathcal{A}_\gamma(\min\{\frac{\alpha}{\gamma}(\frac{\pi}{2}+\theta_0)-\frac{\pi}{2},\frac{\pi}{2}\})$
for each $\gamma\in(0, \frac{(\pi+ 2\theta_0)\alpha}{\pi})$.

(d) If $-A\in\mathcal{A}_\alpha(\theta_0)$  for some $\alpha \in
(0,2)$ and $\theta_0 \in (0, \min\{\frac{\pi}{2}, \frac{\pi}{\alpha}
- \frac{\pi}{2}\}]$, then
$-A^{1/\alpha}\in\mathcal{A}_1(-\frac{\pi}{\alpha}+\theta_0+\pi)$ if
$\alpha\in(\frac{\pi}{\pi+\theta_0},2)$.

(e) If $-A\in\mathcal{A}_\alpha(\theta_0)$  for some $\alpha \in
(0,2)$ and $\theta_0 \in (0, \min\{\frac{\pi}{2}, \frac{\pi}{\alpha}
- \frac{\pi}{2}\}]$, then
$-A^{\beta}\in\mathcal{A}_\alpha(\min\{-\frac{\beta}{\alpha}\pi +
\frac{\beta}{2\alpha}\pi + \beta \theta_0 + \frac{\pi}{\alpha} -
\frac{\pi}{2}, \pi/2\})$ if $\beta \in(0,
\frac{(2-\alpha)\pi}{2\pi-(\pi+\theta_0)\alpha})$.
\end{prop}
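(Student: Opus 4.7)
The plan is to reduce every assertion to a direct calculation with sectoriality angles, using the two ingredients already stated in the preliminaries: the characterization of analytic fractional resolvent families in terms of sectoriality (Lemma \ref{analytic criteria}(c)) and the behaviour of sectoriality under fractional powers (Lemma \ref{Sectorial operator lem}(e)). The whole proposition is really one argument applied five times with different choices of source exponent, target exponent, and fractional power.

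Concretely, for each part I would first translate the analyticity hypothesis into sectoriality: assuming $-A \in \mathcal{A}_{\alpha}(\theta_0)$ gives, via Lemma \ref{analytic criteria}(c), that $A \in \operatorname{Sect}(\omega_A)$ with
\[
\omega_A = \pi - \Bigl(\tfrac{\pi}{2}+\theta_0\Bigr)\alpha.
\]
Next, for any admissible exponent $b>0$ with $b < \pi/\omega_A$, Lemma \ref{Sectorial operator lem}(e) yields $A^{b} \in \operatorname{Sect}(b\,\omega_A)$. Finally, re-reading Lemma \ref{analytic criteria}(c) in the other direction, $-A^{b} \in \mathcal{A}_{\gamma}(\varphi_0)$ holds precisely when
\[
b\,\omega_A \;\le\; \pi - \Bigl(\tfrac{\pi}{2}+\varphi_0\Bigr)\gamma,
\]
so the largest admissible $\varphi_0$ is the value that saturates this inequality, capped at $\pi/2$ (and, when $\gamma < 2$, also at $\pi/\gamma - \pi/2$) coming from the constraint on $\theta_0$ in Lemma \ref{analytic criteria}. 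Solving the above inequality for $\varphi_0$ yields the stated angles in (a)--(e): for (a) and (b) one specializes to $\alpha = 1$ in the hypothesis and chooses target $\gamma = 1$ or $\gamma = \alpha$ respectively, (c) corresponds to taking $b = 1$ (no fractional power), (d) is $b = 1/\alpha$ and target $\gamma = 1$, and (e) is the general $b = \beta$ with target $\gamma = \alpha$.

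The range conditions on the exponents ($\alpha \in (0, \pi/(\pi - 2\theta_0))$ in (a), etc.) come from two sources that I would verify simultaneously: the hypothesis $b < \pi/\omega_A$ of Lemma \ref{Sectorial operator lem}(e) ensuring that the fractional power is again sectorial, and the requirement $\varphi_0 > 0$ for the conclusion to assert genuine analyticity. Both reduce to the same arithmetic inequality in the parameters $\alpha, \beta, \gamma, \theta_0$, and a short computation shows they are equivalent to the ranges stated.

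The only genuine obstacle is bookkeeping: one must be careful to distinguish the hypotheses where $\theta_0$ is constrained by $\theta_0 \le \min\{\pi/2, \pi/\alpha - \pi/2\}$ (relevant in (c)--(e)) from the simpler case $\alpha = 1$ in (a)--(b), and to remember to cap the resulting angle at $\pi/2$ (and at $\pi/\gamma - \pi/2$ when $\gamma \in (1,2)$). Once those bookkeeping conditions are tracked, each part is a two-line arithmetic verification and no analysis beyond the two cited lemmas is needed.
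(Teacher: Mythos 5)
Your proposal is correct and is exactly the argument the paper intends: the paper gives no separate proof, stating only that the proposition "can be proved similarly as the proof of Theorem \ref{main} (a) by using Lemma \ref{analytic criteria}," and that proof is precisely your three-step scheme (translate analyticity to sectoriality via Lemma \ref{analytic criteria}(c), apply Lemma \ref{Sectorial operator lem}(e) to the fractional power, translate back, with the exponent ranges forced by $\varphi_0>0$). Carrying out the arithmetic as you describe does reproduce the stated angles in (a)--(d); as a side remark, doing so carefully in (e) suggests the printed angle and range there contain a typo (the correct angle should reduce to $\theta_0$ when $\beta=1$), which your method would detect.
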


\begin{rem}\label{integral representation1 cor rem}
Proposition \ref{analytic-cor} (a) improves Theorem 3.1 in \cite{HZ}
in that we do not need $0\in \rho(A)$.
\end{rem}

\begin{example}\label{example1}
Let $\alpha\in(0,2)$ and $k>0$.
Set $X:=L^p(\mathbb{R}),\,A:=-kD_x^2$ with $D(A)=W^{2,p}(\Real)$. It
is well known that $-A$ generates a bounded analytic semigroup of
angle $\frac{\pi}{2}$. Thus, by Proposition \ref{analytic-cor} one
has

(a)
$-A\in\mathcal{A}_\alpha(\min\{\frac{\pi}{\alpha}-\frac{\pi}{2},\frac{\pi}{2}\})$
for all $\alpha\in(0,2)$;

(b)
$-A^\alpha\in\mathcal{A}_\alpha(\min\{\frac{\pi}{\alpha}-\frac{\pi}{2},\frac{\pi}{2}\})$
for all $\alpha\in(0,2)$;

(c) $-A^\alpha \in \mathcal{A}_1(\pi/2)$ for all $\alpha \in
(0,\infty)$.

\end{example}

\begin{example}\label{example2}
Let $\alpha\in(0,2)$ and $\theta\in[0,\pi)$.
Set $X:=L^2(0,1),\,B_\theta:=-e^{i\theta}D_x^2$ with
$D(B_\theta)=\{f\in W^{2,2}(0,1):f(0)=f(1)=0\}$. It is proved that
for $\frac{\pi}{2}<\theta\leq (1-\frac{\alpha}{2})\pi$, $-B_\theta
\in \mathcal{A}_\alpha(\theta_0)$ with
$\theta_0=\min\{\frac{\pi}{\alpha}-\frac{\theta}{\alpha}-\frac{\pi}{2},\frac{\pi}{2}\}$,
but does not generate any $C_0$-semigroup (see Example 2.20 in
\cite{Baj}). However, by Corollary \ref{bounded-cor} (d),
$-B_\theta^{1/\alpha} \in \mathcal{A}_1({\pi\over 2} - {\theta\over
\alpha} )$ for $\alpha \in (1,2)$.
\end{example}


\section{Solutions to  fractional Cauchy problems}\label{4}
In this section we will consider the solutions of fractional Cauchy
problems. First we give the definitions of solutions to the
inhomogeneous initial value problem
\begin{equation}\label{ACPf}
\begin{split}
&D_t^\alpha u(t) =Au(t) + f(t),\,\, t \in (0,\tau) \\
&u^{(k)}(0)=x_k,\,\, k=0,1,\cdots,m-1,
\end{split}
\end{equation}
where $\tau\in (0,+\infty]$, $f \in L^1_{loc}([0,\tau);X)$ and $A$
is a closed densely defined operator on Banach space $X$.
\begin{defn}
A function $u(t)\in C([0,\tau);X)$ is called a strong solution (or
simply solution) of (\ref{ACPf}) if $u(t)$ satisfies:

 (a) $u(t) \in C([0,\tau); D(A))
\cap C^{m-1}([0,\tau); X)$.

 (b) $g_{m-\alpha}*(u-
\sum\limits_{k=0}^{m-1}g_{k+1}x_k) \in C^{m}([0,\tau);X)$.

 (c) $u(t)$ satisfies Eq. (\ref{ACPf}).

 $u(t) \in C([0,\tau);X)$ is called a mild solution
of (\ref{ACPf}) if $g_\alpha *u \in D(A)$ and
$$u(t)=
\sum_{k=0}^{m-1} g_{k+1}(t)x_k +A(g_{\alpha}* u)(t) +
(g_{\alpha}*f)(t), \quad t \ge 0.
$$
\end{defn}

Suppose that $A$ generates an $\alpha$-times resolvent family
$S_\alpha(t)$, then the strong solution of (\ref{ACPf}) with $f=0$
and $x_k \in D(A)$ is given by
$$
u(t)= \sum_{k=0}^{m-1} (g_k*S_\alpha)(t)x_k,
$$
see \cite{Baj} for more details. So we now turn to the following
problem
\begin{equation}\label{ACPf0}
\begin{split}
&D_t^\alpha u(t) = Au(t) + f(t),\,\, t \in (0,\tau) \\
&u^{(k)}(0)=0,\,\, k=0,1,\cdots,m-1.
\end{split}
\end{equation}
If $u$ is a mild solution of (\ref{ACPf0}), then $g_\alpha
*u \in D(A)$ and $u=A(g_\alpha *u) + g_\alpha *f$. By Remark
\ref{changeorder},
\begin{eqnarray*}
1*u &=& (S_\alpha -A( g_\alpha * S_\alpha))*u= S_\alpha *u - S_\alpha * A(g_\alpha *u)\\
&=& S_\alpha *u - S_\alpha *u + S_\alpha * g_\alpha *f = g_\alpha
*S_\alpha * f,
\end{eqnarray*}
which means that $g_\alpha *S_\alpha *f$ is differentiable and the
mild solution is given by
\begin{equation}\label{mild}
u(t) = \frac{d}{dt}(g_{\alpha }*S_\alpha*f)(t), \quad t \ge 0.
\end{equation}
Consequently we have

\begin{prop}\label{strong-solution}
Let $A$ be the generator of an $\alpha$-times resolvent family
$S_\alpha$ and let $f \in L^1_{loc}([0,\tau);X)$. If (\ref{ACPf0})
has a mild solution, then it is given by (\ref{mild}). And the mild
solution of (\ref{ACPf}) is given by
$$
u(t) =\sum_{k=0}^{m-1}
(g_{k}*S_\alpha)(t)x_k+\frac{d}{dt}(g_{\alpha}*S_\alpha*f)(t), \quad
t \ge 0.
$$
\end{prop}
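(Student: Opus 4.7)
The plan is to verify the two claims in sequence, building on the chain of identities sketched in the paragraph immediately preceding the proposition. For the first claim — uniqueness of the mild solution of (\ref{ACPf0}) — I would start from the defining equation $u = A(g_\alpha \ast u) + g_\alpha \ast f$ and integrate it against the constant $1$ in time, aiming to arrive at $1 \ast u = g_\alpha \ast S_\alpha \ast f$. Differentiating this identity then forces $u(t) = \frac{d}{dt}(g_\alpha \ast S_\alpha \ast f)(t)$, establishing both existence of the derivative and the stated formula.

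The key computational step exploits the reformulation in Remark \ref{changeorder}: for every $x \in X$ one has $S_\alpha(t)x - x = A\!\int_0^t g_\alpha(t-s)S_\alpha(s)x\,ds$, i.e.\ $I = S_\alpha - A(g_\alpha \ast S_\alpha)$ in a pointwise-operator sense. Applying this identity to $u(s)$ and integrating (using Fubini together with the closedness of $A$), one obtains
\[
1 \ast u = S_\alpha \ast u - A(g_\alpha \ast S_\alpha \ast u).
\]
Next, I commute $A$ with one of the convolutions by combining $S_\alpha A \subseteq A S_\alpha$ with the closedness of $A$: since $g_\alpha \ast u$ takes values in $D(A)$, one may write $A(g_\alpha \ast S_\alpha \ast u) = S_\alpha \ast A(g_\alpha \ast u)$. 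Substituting the mild-solution identity $A(g_\alpha \ast u) = u - g_\alpha \ast f$ collapses the right-hand side to $S_\alpha \ast g_\alpha \ast f = g_\alpha \ast S_\alpha \ast f$, as required.

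For the second claim — the formula for the inhomogeneous problem with non-trivial initial values — I would invoke linearity of (\ref{ACPf}) and decompose any mild solution as $u = u_{\text{hom}} + u_{\text{inh}}$, where $u_{\text{hom}}$ is the mild solution of the homogeneous problem with initial data $(x_0,\dots,x_{m-1})$ and $u_{\text{inh}}$ is the mild solution of (\ref{ACPf0}). The homogeneous part equals $\sum_{k=0}^{m-1}(g_k \ast S_\alpha)(t)x_k$ — which is a strong and hence mild solution by the result from \cite{Baj} recalled just before the proposition (with the convention $g_0 = \delta$ giving the initial term $S_\alpha(t)x_0$) — while the inhomogeneous part is supplied by the first claim.

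The main obstacle will be the careful justification of exchanging $A$ with the convolution integrals in the core calculation, in particular the identity $A(g_\alpha \ast S_\alpha \ast u) = S_\alpha \ast A(g_\alpha \ast u)$. This requires approximating the convolutions by vector-valued Riemann sums, applying the commutation $S_\alpha A \subseteq AS_\alpha$ termwise, and then pulling $A$ through the limit using closedness; some care is also needed to ensure that $g_\alpha \ast u$ lands in $D(A)$ so that Remark \ref{changeorder} can be invoked pointwise. The rest of the argument is essentially formal manipulation of convolutions and linearity.
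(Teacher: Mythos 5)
Your proposal reproduces the paper's own argument: the paper derives (\ref{mild}) in the paragraph preceding the proposition via exactly the chain $1\ast u=(S_\alpha-A(g_\alpha\ast S_\alpha))\ast u=S_\alpha\ast u-S_\alpha\ast A(g_\alpha\ast u)=g_\alpha\ast S_\alpha\ast f$, using Remark \ref{changeorder}, the commutation $S_\alpha A\subseteq AS_\alpha$ with closedness of $A$, and the substitution $A(g_\alpha\ast u)=u-g_\alpha\ast f$, and the inhomogeneous formula follows by the same linear superposition with the homogeneous solution $\sum_k(g_k\ast S_\alpha)(t)x_k$. The proof is correct and takes essentially the same approach as the paper.
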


For the strong solutions of (\ref{ACPf0}), we have
\begin{prop}
Let $\alpha \in (0,2]$. Suppose that $A$ is the generator of an
$\alpha$-times resolvent family $S_\alpha$ and let $f \in
C([0,\tau);X)$. Then the following statements are equivalent:

$(a)$ (\ref{ACPf0}) has a strong solution on $[0,\tau)$.

$(b)$ $S_\alpha *f $ is differentiable on $[0,\tau)$.

$(c)$ $\frac{d}{dt}(g_{\alpha} * S_\alpha *f)(t) \in D(A)$ for $t
\in [0,\tau)$ and $A(\frac{d}{dt}(g_{\alpha} * S_\alpha *f)(t))$ is
continuous on $[0,\tau)$.

In the case $\alpha \in [1,2]$, the condition $(c)$ can be replaced
by

$(c)'$ $(g_{\alpha-1} * S_\alpha *f)(t) \in D(A)$ for $t \in
[0,\tau)$ and $A(g_{\alpha-1} * S_\alpha *f)(t)$ is continuous on
$[0,\tau)$.
\end{prop}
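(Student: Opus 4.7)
The plan is to establish the circular chain (a) $\Rightarrow$ (b) $\Rightarrow$ (c) $\Rightarrow$ (a) and then derive the equivalence (c) $\Leftrightarrow$ (c)' for $\alpha\in[1,2]$ as a direct convolution calculation. The workhorse throughout will be the identity
\[
(S_\alpha * f)(t) = (1*f)(t) + A(g_\alpha * S_\alpha * f)(t), \quad t \ge 0,
\]
obtained by convolving the resolvent equation of Remark~\ref{changeorder} against $f$ and using closedness of $A$ to push it outside the outer integral. In particular $(g_\alpha * S_\alpha * f)(t)\in D(A)$ for every $t$.

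For (a) $\Rightarrow$ (b), a strong solution is automatically a mild solution (apply $J^\alpha_t$ to $D^\alpha_t u = Au+f$ and use closedness of $A$), so by Proposition~\ref{strong-solution} one has $u = \tfrac{d}{dt}(g_\alpha * S_\alpha * f)$. Since $u(0)=0$, $g_\alpha * S_\alpha * f = 1*u$, and the workhorse identity reduces, via closedness, to $S_\alpha * f = 1*(f+Au)$, so $(S_\alpha * f)' = f + Au$ is continuous. For (b) $\Rightarrow$ (c), set $w := (S_\alpha * f)'$; since $(S_\alpha * f)(0)=0$ one has $S_\alpha * f = 1*w$ and hence $g_\alpha * S_\alpha * f = 1*(g_\alpha * w)$, which shows $\tfrac{d}{dt}(g_\alpha * S_\alpha * f) = g_\alpha * w$ exists continuously. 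The workhorse identity gives $A(g_\alpha * S_\alpha * f) = 1*(w-f)$, and applying closedness of $A$ to the difference quotients of $g_\alpha * S_\alpha * f$ (whose values lie in $D(A)$) forces $(g_\alpha*w)(t)\in D(A)$ with $A(g_\alpha*w)(t) = w(t)-f(t)$, as required.

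For (c) $\Rightarrow$ (a), let $u := \tfrac{d}{dt}(g_\alpha * S_\alpha * f)$, so $u(t)\in D(A)$ and $Au$ is continuous by hypothesis. Closedness applied to $g_\alpha * S_\alpha * f = 1*u$ together with the workhorse identity give $S_\alpha * f = 1*(f+Au)$, hence $g_\alpha * S_\alpha * f = g_{\alpha+1}*(f+Au)$, and differentiating (legitimate since $\alpha+1>1$ and $f+Au$ is continuous) yields the key representation
\[
u(t) = (g_\alpha * (f+Au))(t).
\]
Every clause of the strong-solution definition can now be read off this formula: $u\in C([0,\tau);D(A))$ with $u(0)=0$, and for $\alpha\in(1,2]$, $u' = g_{\alpha-1}*(f+Au)$ is continuous with $u'(0)=0$, so $u\in C^{m-1}$ with vanishing initial derivatives; moreover $g_{m-\alpha}*u = g_m*(f+Au)$ lies in $C^m$ with $m$-th derivative $Au+f$, giving $D_t^\alpha u = Au+f$. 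Finally, $g_\alpha = g_{\alpha-1}*1$ together with $(1*h)(0)=0$ yield $\tfrac{d}{dt}(g_\alpha * S_\alpha * f) = g_{\alpha-1}*S_\alpha * f$ for $\alpha\in[1,2]$ (with $g_0 = \delta$ at $\alpha=1$), making (c) $\Leftrightarrow$ (c)' immediate.

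The main obstacle I anticipate is keeping the fractional differentiation identity $(g_\alpha * h)' = g_{\alpha-1}*h$ under control uniformly across $\alpha\in(0,2]$, since for $\alpha<1$ the right-hand side is only a distribution. The strategy of expressing $u$ through the representation $u = g_\alpha*(f+Au)$ and verifying the strong-solution definition via the integrated form $g_{m-\alpha}*u\in C^m$ avoids differentiating $u$ itself at fractional order, neatly bypassing this difficulty.
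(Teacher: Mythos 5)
Your proof is correct in substance, but it is worth noting that the paper itself offers no argument here: it simply cites \cite{LL} for the equivalence of $(a)$, $(b)$ and $(c)'$ when $\alpha\in[1,2]$ and asserts that $\alpha\in(0,1)$ "can be proved similarly." Your self-contained chain $(a)\Rightarrow(b)\Rightarrow(c)\Rightarrow(a)$, built on the identity $(S_\alpha*f)(t)=(1*f)(t)+A(g_\alpha*S_\alpha*f)(t)$ from Remark \ref{changeorder} and on Proposition \ref{strong-solution}, is exactly the kind of argument the citation is standing in for, and it has the advantage of treating all $\alpha\in(0,2]$ uniformly: in particular your closing observation that $\frac{d}{dt}(g_\alpha*S_\alpha*f)=g_{\alpha-1}*S_\alpha*f$ for $\alpha\in[1,2]$ shows cleanly why $(c)$ specializes to $(c)'$ there, and your verification of the strong-solution definition through $u=g_\alpha*(f+Au)$ and $g_{m-\alpha}*u=g_m*(f+Au)$ correctly sidesteps differentiating $u$ at fractional order. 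One caveat you should make explicit: in the steps $(b)\Rightarrow(c)$ and $(c)\Rightarrow(a)$ you tacitly treat the derivative $w=(S_\alpha*f)'$ (respectively $u=\frac{d}{dt}(g_\alpha*S_\alpha*f)$) as continuous, or at least locally Bochner integrable, when you write $S_\alpha*f=1*w$, form $g_\alpha*w$, and invoke Hille's theorem to get $A(1*u)=1*(Au)$; a pointwise derivative of a vector-valued function need not have these properties a priori. This is the standard reading of "differentiable" in this literature (and the equivalence with $(a)$ forces the derivative to equal the continuous function $f+Au$ in the end), but a careful write-up should either state continuous differentiability in $(b)$ and $(c)$ or supply the small extra argument.
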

\begin{proof}
The equivalence of $(a)$, $(b)$ and $(c)'$  was given in \cite{LL}
for the case $ \alpha \in [1,2]$. The case $\alpha \in (0,1)$ can be
proved similarly.
\end{proof}

As a corollary we have
\begin{cor}\label{inho-cor}
Let $\alpha \in (0,2]$. Suppose that $A$ is the generator of an
$\alpha$-times resolvent family. Then (\ref{ACPf0}) has a strong
solution on $[0,\tau)$ if one of the following conditions is
satisfied:

(a) $f $ is continuously differentiable on $[0,\tau)$.

(b) $\alpha \in [1,2]$, $f(t) \in D(A)$ for $t \in [0,\tau)$ and
$Af(t) \in L^1_{loc}([0,\tau);X)$.

(c) $\alpha \in (0,1)$, $f(t) \in D(A)$ for $t \in [0,\tau)$ and
$g_\alpha*f $ is continuously differentiable on $[0,\tau)$.
\end{cor}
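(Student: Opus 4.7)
The strategy in all three cases is to invoke the preceding proposition and verify, under the given hypotheses, one of its equivalent conditions $(b)$, $(c)$, or $(c)'$; a strong solution then exists automatically.

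For case $(a)$ with $f \in C^1$, I would verify condition $(b)$ directly. After the substitution $\sigma = t-s$ one has $(S_\alpha * f)(t) = \int_0^t S_\alpha(\sigma) f(t-\sigma)\,d\sigma$, and continuity of $f'$ lets one differentiate under the integral, giving $\frac{d}{dt}(S_\alpha*f)(t) = S_\alpha(t) f(0) + (S_\alpha * f')(t)$, which is continuous.

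For case $(b)$ with $\alpha \in [1,2]$, $f(s)\in D(A)$, and $Af \in L^1_{\mathrm{loc}}$, I would verify condition $(c)'$. The resolvent-family property $AS_\alpha(r)x = S_\alpha(r)Ax$ for $x\in D(A)$, together with closedness of $A$ and local integrability of $Af$, lets one bring $A$ inside both convolution integrals to obtain
$$A(g_{\alpha-1} * S_\alpha * f)(t) = (g_{\alpha-1} * S_\alpha * Af)(t),$$
which is continuous in $t$ (interpreting $g_0=\delta$ when $\alpha=1$, since then $g_{\alpha-1}$ is locally integrable).

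For case $(c)$ with $\alpha \in (0,1)$, $f(s)\in D(A)$, and $g_\alpha * f \in C^1$, I would verify condition $(c)$. Associativity gives $g_\alpha * S_\alpha * f = S_\alpha * (g_\alpha * f)$. Because $(g_\alpha * f)(0)=0$ and $g_\alpha * f$ is continuously differentiable, repeating the case-$(a)$ argument with $g_\alpha * f$ in place of $f$ yields
$$\tfrac{d}{dt}(g_\alpha * S_\alpha * f)(t) = \bigl(S_\alpha * (g_\alpha * f)'\bigr)(t) \in C([0,\tau);X).$$
To identify this as an element of $D(A)$ with continuous $A$-image, I would first use the identity $A(g_\alpha * S_\alpha)(t)x = S_\alpha(t)x - x$ (Remark \ref{changeorder}) together with closedness of $A$ to show $(g_\alpha * S_\alpha * f)(t) \in D(A)$ with $A(g_\alpha * S_\alpha * f)(t) = (S_\alpha * f)(t) - (1*f)(t)$, and then, if this quantity is continuously differentiable, apply closedness of $A$ once more to the difference quotients of $(g_\alpha * S_\alpha * f)(t)$.

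The main obstacle is the last step of case $(c)$: verifying that $A(g_\alpha * S_\alpha * f)(t)$ is continuously differentiable. Since $1*f$ is trivially $C^1$, this reduces to establishing that $S_\alpha * f \in C^1$, which does not follow from the continuity of $f$ alone. The fractional regularity $g_\alpha * f \in C^1$ must be used here; the natural route is to apply the resolvent equation pointwise to each $f(s) \in D(A)$ and invoke Fubini's theorem to rewrite $(S_\alpha * f)(t)$ in a form that makes the required differentiability manifest, after which closedness of $A$ finishes the argument.
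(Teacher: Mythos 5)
Your cases (a) and (b) are fine and are exactly the verifications the paper leaves to the reader (the corollary is stated without proof): in (a) you obtain differentiability of $S_\alpha*f$, i.e.\ condition $(b)$ of the preceding proposition, from $\frac{d}{dt}(S_\alpha*f)(t)=S_\alpha(t)f(0)+(S_\alpha*f')(t)$; in (b) the hypothesis $Af\in L^1_{loc}$ makes $s\mapsto S_\alpha(t-s)Af(s)$ Bochner integrable, so $A$ passes through both convolutions by closedness and $(c)'$ follows.

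Case (c) contains a genuine gap, which you have located but not closed. First, your route through condition $(c)$ of the proposition is circular: you reduce everything to showing that $A(g_\alpha*S_\alpha*f)=S_\alpha*f-g_1*f$ is continuously differentiable, i.e.\ to showing that $S_\alpha*f$ is differentiable --- but by the equivalence $(a)\Leftrightarrow(b)$ of that proposition this is already equivalent to the conclusion, so the detour buys nothing and the entire content of case (c) is the unproved claim that $S_\alpha*f\in C^1$. Second, the ``natural route'' you propose for that claim --- apply the resolvent equation pointwise to $f(s)\in D(A)$ and use Fubini to get $(S_\alpha*f)(t)=(g_1*f)(t)+(g_\alpha*S_\alpha*Af)(t)$ --- requires $s\mapsto Af(s)$ to be strongly measurable and locally Bochner integrable, which is not among the hypotheses of (c); only the pointwise inclusion $f(t)\in D(A)$ is assumed, and closedness of $A$ alone does not give integrability of $Af$. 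Even granting that step, $g_\alpha*S_\alpha*Af=S_\alpha*(g_\alpha*Af)$ is not obviously $C^1$; one must still bring in the hypothesis $g_\alpha*f\in C^1$, e.g.\ by setting $h:=(g_\alpha*f)'$, so that $f=g_{1-\alpha}*h$ and $S_\alpha*f=g_{1-\alpha}*(S_\alpha*h)$, and then extracting differentiability from the resolvent equation applied to $h$ --- which again needs $D(A)$-valuedness and integrability of $Ah$ or $Af$, not merely $f(t)\in D(A)$. To finish, you should either carry the extra integrability of $Af$ as a working hypothesis (it is available in the paper's only application of part (c), Proposition \ref{1/m}, where $f\in C(\Real_+,D(A^m))$) or supply a different mechanism for the differentiability of $S_\alpha*f$; as written, case (c) is not proved.
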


If $A$ generates an $\alpha$-times resolvent family $S_\alpha$, then
for $x \in D(A^n)$ by using (\ref{resolvent equation}) several times
we have
\begin{equation}\label{Salphan}
\begin{split}
S_{\alpha}(t)x &= x + (g_\alpha * S_{\alpha})(t)Ax \\
&=x + (g_\alpha * 1)(t)Ax +(g_\alpha* (g_\alpha
* S_{\alpha}))(t)A^2x \\
&= x + g_{\alpha+1}(t)Ax + (g_{2\alpha} *
S_{\alpha})(t) A^2 x \\
&= \cdots\\
&= x + g_{\alpha+1}(t) Ax +\cdots + g_{(n-1)\alpha+1}(t) A^{n-1}x +
(g_{n\alpha}*S_{\alpha})(t)A^nx,
\end{split}
\end{equation}
which leads to

\begin{lem}
If $A$ generates an $\alpha$-times resolvent family $S_\alpha$, then
for $x \in D(A^n)$ with $n\alpha \ge 1$, $S_\alpha(t)x$ is
differentiable and
$$
\frac{d}{dt}(S_\alpha(t)x) = \sum_{k=1}^{n-1} g_{k\alpha}(t)A^k x +
(g_{n\alpha -1} * S_\alpha)(t)A^n x, \quad t >0.
$$
\end{lem}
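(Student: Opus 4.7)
The proof is essentially a direct differentiation of the identity (\ref{Salphan}), so the plan is short and the work has already been done upstream.

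First I would start from the expansion
\[
S_{\alpha}(t)x = x + g_{\alpha+1}(t) Ax +\cdots + g_{(n-1)\alpha+1}(t) A^{n-1}x + (g_{n\alpha}*S_{\alpha})(t)A^nx
\]
which is valid for $x\in D(A^n)$ by (\ref{Salphan}). The terms $x, g_{\alpha+1}(t)Ax, \ldots, g_{(n-1)\alpha+1}(t)A^{n-1}x$ are easy: each $g_{k\alpha+1}(t) = t^{k\alpha}/\Gamma(k\alpha+1)$ is smooth on $(0,\infty)$ with derivative $g_{k\alpha}(t)$, giving the sum $\sum_{k=1}^{n-1} g_{k\alpha}(t) A^k x$ in the stated formula.

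The only nontrivial piece is to differentiate the last term $(g_{n\alpha}*S_{\alpha})(t)A^n x$. Here the hypothesis $n\alpha \geq 1$ is used. If $n\alpha > 1$, then $g_{n\alpha-1}$ is locally integrable on $[0,\infty)$ and the semigroup identity for the kernels $g_\beta$ gives $g_{n\alpha} = g_1 * g_{n\alpha-1}$, so
\[
(g_{n\alpha}*S_{\alpha})(t)A^n x = \int_0^t (g_{n\alpha-1}*S_\alpha)(r)\,A^n x\,dr.
\]
Since $S_\alpha$ is strongly continuous and $g_{n\alpha-1}\in L^1_{\mathrm{loc}}$, the integrand $r\mapsto (g_{n\alpha-1}*S_\alpha)(r)A^n x$ is continuous in $r$, so by the fundamental theorem of calculus the derivative equals $(g_{n\alpha-1}*S_\alpha)(t) A^n x$. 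In the boundary case $n\alpha = 1$, one has $g_{n\alpha}=g_1\equiv 1$ so $(g_{n\alpha}*S_\alpha)(t) = \int_0^t S_\alpha(s)ds$, whose derivative is $S_\alpha(t)A^n x$; this agrees with the convention $g_0 = \delta$, so that $(g_{n\alpha-1}*S_\alpha)(t)A^n x = S_\alpha(t)A^n x$ fits the formula.

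Combining these computations yields the claimed identity. The main (minor) obstacle is really just handling the two cases $n\alpha = 1$ and $n\alpha > 1$ uniformly via the convention $g_0 = \delta$, and checking that differentiation under the convolution is justified by the local integrability of $g_{n\alpha-1}$ together with the strong continuity of $S_\alpha$; no further estimates are needed.
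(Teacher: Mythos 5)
Your proof is correct and follows exactly the route the paper intends: differentiate the iterated expansion (\ref{Salphan}), with the polynomial terms giving $\sum_{k=1}^{n-1} g_{k\alpha}(t)A^kx$ and the hypothesis $n\alpha\ge 1$ ensuring (via $g_{n\alpha}=g_1*g_{n\alpha-1}$ and continuity of $g_{n\alpha-1}*S_\alpha$) that the last term differentiates to $(g_{n\alpha-1}*S_\alpha)(t)A^nx$. The paper states the lemma as an immediate consequence of (\ref{Salphan}) without further detail, so your write-up simply supplies the routine justification it leaves implicit.
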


 In
particular, let $\alpha = 1/m$ with $m \in \mathbb{N}$, we obtain

\begin{prop}\label{prop1/m}
Let $m \in \mathbb{N}$. Suppose that $A$ generates a $(1/m)$-times
resolvent family $S_{1/m}$. Then for each $x\in D(A^{m})$,
$S_{1/m}(\cdot)x$ solves the fractional Cauchy problem
\begin{equation}\label{FACP2}
\begin{split}
&D_t^{1/m} u(t) = A u(t), \quad t > 0,\\
&u(0)=x,
\end{split}
\end{equation}
and the initial value problem
\begin{equation}\label{ACPf1}
\begin{split}
&v'(t) = A^{m} v(t) + \sum_{k=1}^{m-1}g_{{k}/{m}}(t)A^k x, \quad t > 0,\\
&v(0)=x.
\end{split}
\end{equation}
\end{prop}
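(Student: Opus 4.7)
The plan is to extract both assertions from relation (\ref{Salphan}) with $\alpha = 1/m$ and $n = m$, together with the defining resolvent equation of $S_{1/m}$.

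For the fractional Cauchy problem (\ref{FACP2}), I would start from the resolvent identity $S_{1/m}(t)x = x + (g_{1/m} * S_{1/m})(t)Ax$, valid since $x \in D(A^m) \subset D(A)$. Applying the Caputo derivative $D_t^{1/m}$ and using the convolution identity $g_{1-1/m} * g_{1/m} = g_1$—so that $D_t^{1/m}$ inverts convolution with $g_{1/m}$ on functions vanishing at the origin, while annihilating the constant $x$—yields $D_t^{1/m} S_{1/m}(t)x = S_{1/m}(t)Ax = AS_{1/m}(t)x$, where the last step uses property (b) of Definition \ref{resolvent family}. The initial condition $u(0)=x$ is just $S_{1/m}(0)=I$. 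Equivalently, one can verify this via Laplace transform using Lemma \ref{Laplace transform}, since $\widehat{D_t^{1/m}u}(\lambda) = \lambda^{1/m}\hat{u}(\lambda)-\lambda^{1/m-1}x$ matches $A\widehat{S_{1/m}(\cdot)x}(\lambda)$ by the characterization $\lambda^{1/m-1}(\lambda^{1/m}-A)^{-1}x = \widehat{S_{1/m}(\cdot)x}(\lambda)$.

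For the initial value problem (\ref{ACPf1}), I would invoke the preceding lemma with $\alpha = 1/m$ and $n = m$. Since $n\alpha = 1$ makes $g_{n\alpha-1}=g_0=\delta$, the convolution term $(g_{n\alpha-1}*S_{1/m})(t)A^n x$ collapses to $S_{1/m}(t)A^m x$, and the lemma becomes
$$\frac{d}{dt}S_{1/m}(t)x = \sum_{k=1}^{m-1} g_{k/m}(t)A^k x + S_{1/m}(t)A^m x.$$
For $x \in D(A^m)$, iterated application of $S_{1/m}(t)A \subset AS_{1/m}(t)$ gives $S_{1/m}(t)A^m x = A^m S_{1/m}(t)x$, producing exactly the right-hand side of (\ref{ACPf1}), with initial value $v(0)=S_{1/m}(0)x=x$.

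I expect no serious obstacle. The only care needed is bookkeeping: verifying that for $x \in D(A^m)$ all intermediate elements $A^k x$ lie in the domains needed for the convolutions to make sense and for the commutation $S_{1/m}(t)A^k = A^k S_{1/m}(t)$ on $x$ to hold by iterating property (b) of Definition \ref{resolvent family}. The only notational subtlety is the convention $g_0 = \delta$, which is what makes the boundary case $n\alpha=1$ work cleanly and links the two formulations.
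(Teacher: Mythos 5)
Your proposal is correct and follows essentially the same route as the paper: the first assertion is the standard fact that $S_{1/m}(\cdot)x$ solves the homogeneous fractional Cauchy problem for $x\in D(A)$ (obtained from the resolvent equation exactly as you describe), and the second is precisely the specialization of (\ref{Salphan}) and the preceding lemma to $\alpha=1/m$, $n=m$, where $n\alpha=1$ and $g_0=\delta$ collapse the convolution term to $S_{1/m}(t)A^mx=A^mS_{1/m}(t)x$. The paper gives no further argument, so your write-up just makes explicit the details it leaves implicit.
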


\begin{rems}\label{theorem 1/m rem}
(a) If $A$ generates a $C_0$-semigroup, then by the subordination
principle $A$ generates an (analytic) $(1/m)$-times resolvent
family. So Proposition \ref{prop1/m} gives Theorem 3.3 in \cite{KL}
immediately.

(b) Note that $A^m$ does not necessarily generate a $C_0$-semigroup
when $A$ generates a $1/m$-resolvent family, we cannot obtain the
uniqueness of solution of (\ref{ACPf1}) without any further
assumption on the operator $A$ and a counterexample was given in
\cite{BMN}.
\end{rems}

For the corresponding inhomogeneous problems, we have
\begin{prop}\label{1/m}
Let $m\geq 2$ be fixed. Assume that $A$ is the generator of a
$(1/m)$-times resolvent family $S_{1/m}$, then for $x \in D(A^m)$,
$f(t) \in C(\Real_+, D (A^{m}))$, the function $ S_{{1}/{m}}(t) x +
(S_{{1}/{m}}*f)(t) $ solves the two equations:
\begin{equation}\label{inho-1}
\begin{array}{l}
D_t^{{1}/{m}} u(t) = A u(t) + (g_{(1-1/m)}*f)(t),\quad  t> 0 \\
u(0) = x\\
\end{array}
\end{equation}
and
\begin{equation}
\begin{array}{l}\label{inho-2}
v'(t) = A^m v(t) + \sum\limits_{k=1}^{m-1} g_{{k}/{m}}(t) A^k x +
\sum\limits_{k=0}^{m-1}
(g_{{k}/{m}}* A^kf)(t) , \quad t> 0 \\
v(0) = x.\\
\end{array}
\end{equation}
 \end{prop}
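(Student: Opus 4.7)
The plan is to decompose the candidate solution $u(t):=S_{1/m}(t)x+(S_{1/m}*f)(t)$ into its homogeneous piece $S_{1/m}(t)x$ and its convolution piece $w(t):=(S_{1/m}*f)(t)$, handle each separately in each of the two equations, and add the resulting identities. The homogeneous piece is already treated by Proposition \ref{prop1/m}: since $x\in D(A^m)$, it contributes $D_t^{1/m}S_{1/m}(t)x=AS_{1/m}(t)x$ for (\ref{inho-1}) and $(d/dt)S_{1/m}(t)x=A^mS_{1/m}(t)x+\sum_{k=1}^{m-1}g_{k/m}(t)A^kx$ for (\ref{inho-2}). The initial value $u(0)=x$ is immediate from $w(0)=0$. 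Thus the whole burden reduces to showing that $w$ satisfies
\begin{align*}
D_t^{1/m}w(t) &= Aw(t)+(g_{1-1/m}*f)(t),\\
w'(t) &= A^m w(t)+\sum_{k=0}^{m-1}(g_{k/m}*A^kf)(t),
\end{align*}
each with $w(0)=0$.

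For (\ref{inho-1}) I would apply Remark \ref{changeorder} pointwise to the vector $f(s)\in D(A)$, then integrate in $s\in[0,t]$ and use closedness of $A$ together with Fubini's theorem to obtain
$$w(t)=(1*f)(t)+A(g_{1/m}*S_{1/m}*f)(t).$$
Since $w(0)=0$, $D_t^{1/m}w=\frac{d}{dt}(g_{1-1/m}*w)$. Convolving the displayed identity with $g_{1-1/m}$ and using $g_{1-1/m}*g_{1/m}=g_1$ gives
$$g_{1-1/m}*w = g_{2-1/m}*f + A(1*S_{1/m}*f).$$
Both summands are absolutely continuous in $t$ (by continuity of $f$ and of $S_{1/m}*f$), so I can differentiate term by term, obtaining $g_{1-1/m}*f$ from the first term and, after pulling $A$ through the derivative by closedness, $A(S_{1/m}*f)=Aw$ from the second. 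This is exactly the first equation.

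For (\ref{inho-2}) I would iterate the resolvent equation to obtain formula (\ref{Salphan}) with $\alpha=1/m$ and $n=m$: for $y\in D(A^m)$,
$$S_{1/m}(\tau)y=y+\sum_{k=1}^{m-1}g_{1+k/m}(\tau)A^ky+(1*S_{1/m})(\tau)A^my.$$
Setting $y=f(s)$, $\tau=t-s$, integrating in $s$, and applying Fubini (using $A^kf\in C(\Real_+;X)$ for $0\le k\le m$) yields
$$w(t)=(1*f)(t)+\sum_{k=1}^{m-1}(1*g_{k/m}*A^kf)(t)+(1*S_{1/m}*A^mf)(t).$$
Differentiating in $t$ (the outer $1*$ of a continuous integrand simply drops) gives
$$w'(t)=f(t)+\sum_{k=1}^{m-1}(g_{k/m}*A^kf)(t)+(S_{1/m}*A^mf)(t),$$
and commutativity of $S_{1/m}$ with powers of $A$ on $D(A^m)$ converts the last term into $A^mw(t)$; with the conventions $g_0=\delta$ and $A^0=I$ the solitary $f(t)$ is absorbed into the sum.

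The main technical hurdle is the repeated interplay of Fubini's theorem with closedness of $A$ and $A^m$ used to pull the unbounded operators inside and outside of time integrals and derivatives. This is precisely why the hypotheses $x\in D(A^m)$ and $f\in C(\Real_+,D(A^m))$ are imposed: they ensure that each $A^kf(\cdot)$ is continuous for $0\le k\le m$, so the inner integrals arising from the iterated resolvent equation converge absolutely and the closedness arguments apply term by term. Everything else is bookkeeping with the semigroup property of $g_\beta$ under convolution.
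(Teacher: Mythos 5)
Your proof is correct. For (\ref{inho-2}) you do exactly what the paper does: insert the iterated resolvent identity (\ref{Salphan}) with $y=f(s)$ under the convolution, integrate in $s$, differentiate in $t$, and use closedness of $A^{m}$ and the commutation $S_{1/m}(t)A^{m}\subset A^{m}S_{1/m}(t)$ to turn $S_{1/m}*A^{m}f$ into $A^{m}(S_{1/m}*f)$. For (\ref{inho-1}) you take a genuinely different (though equivalent) route: the paper merely observes that $g_{1/m}*(g_{1-1/m}*f)=g_{1}*f$ is continuously differentiable and that $f$ takes values in $D(A)$, so Corollary \ref{inho-cor}(c) gives a strong solution and Proposition \ref{strong-solution} identifies it as $\frac{d}{dt}(g_{1}*S_{1/m}*f)=S_{1/m}*f$, whereas you verify the equation by hand starting from Remark \ref{changeorder} and the convolution semigroup property of the $g_{\beta}$. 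Your computation in effect re-proves the special case of Corollary \ref{inho-cor}(c) that is needed; the citation route is shorter, while your route is self-contained and makes visible exactly where closedness of $A$ and the hypothesis $f\in C(\Real_+,D(A^{m}))$ enter. One small ordering point: identifying $\frac{d}{dt}(g_{1-1/m}*w)$ with the Caputo derivative $g_{1-1/m}*w'$ uses $w(0)=0$ together with the differentiability of $w$, which you only establish in the second half of the argument, so that dependence should be stated up front.
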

\begin{proof}
Since $g_{1/m}* (g_{(1-{1/ m})}*f) = g_1*f$ is differentiable and
$f(t) \in D(A)$ for all $t \ge 0$, by Proposition
\ref{strong-solution} and Corollary \ref{inho-cor} (c), $S_{1/
m}(t)x + (S_{1/ m}*f)(t)$ solves (\ref{inho-1}). It remains to show
that it is also a solution of (\ref{inho-2}). By Proposition
\ref{prop1/m}, we only need to show that $S_{1/ m}*f$ is
differentiable, $(S_{1/ m}*f)(t) \in D(A^m)$ and
$$
(S_{1/ m}*f)'(t) = A^m (S_{1/ m}*f)(t) + \sum\limits_{k=0}^{m-1}
(g_{{k}/{m}}* A^k f)(t), \quad t > 0.
$$
This follows from (\ref{Salphan}) since
\begin{eqnarray*}
&\mbox{}& (S_{1/m}*f)(t)\\
 &=& \int_0^t S_{1/m}(t-s)f(s) ds \\
&=& \int_0^t \Big[f(s) + \sum_{k=1}^{m-1} g_{{k\over
m}+1}(t-s)A^{k}f(s)+ (g_1*S_{1/m})(t-s)A^m f(s)\Big]ds.
\end{eqnarray*}
\end{proof}

Next we will discuss the connections between some pairs of the
Cauchy problems of fractional order (not necessarily a rational
number) and first order.

First, we have the following direct consequences of Corollary
\ref{bounded-cor}.

\begin{thm}\label{general relation homogeneous}
(a) Let $\alpha \in (0,1)$ and $-A \in \mathcal{C}_1(0)$. The
fractional Cauchy problem
\begin{equation}\label{FACP0}
\begin{split}
&D_t^\alpha v(t) = -A v(t), \quad t > 0,\\
&v(0)=x,
\end{split}
\end{equation}
is well-posed and its unique solution is given by
\begin{equation*}
\begin{split}
v(t) = \int_0^\infty \varphi_\alpha (t,s) u(s) ds, \quad t>0,
\end{split}
\end{equation*}
for each $x \in D(A)$, where $\varphi_\alpha$ is given as in
Corollary \ref{bounded-cor} and $u$ is the solution to the Cauchy
problem
\begin{equation}\label{ACP000}
\begin{split}
&u'(t) = -A u(t),\quad t > 0,\\
&u(0)=x.
\end{split}
\end{equation}

(b) Let $\alpha \in (0,1)$ and $-A \in \mathcal{C}_1(0)$. The
fractional Cauchy problem
\begin{equation}\label{ACP inserted}
\begin{split}
& v'(t) = -A^\alpha v(t), \quad t > 0,\\
& v(0)=x,
\end{split}
\end{equation}
is well-posed and its unique solution is given by
\begin{equation*}
\begin{split}
v(t) = \int_0^\infty p_\alpha (t,s) u(s) ds, \quad t>0,
\end{split}
\end{equation*}
for each $x \in D(A)$, where $p_\alpha$ is given as in Corollary
\ref{bounded-cor} and $u$ is the solution to the Cauchy problem
(\ref{ACP000}).

(c) Let $\alpha \in (0,1)$ and $-A \in \mathcal{C}_1(0)$. The
fractional Cauchy problem
\begin{equation}\label{FACP1}
\begin{split}
&D_t^\alpha v(t) = -A^{\alpha} v(t), \quad t > 0,\\
&v(0)=x,
\end{split}
\end{equation}
is well-posed and its unique solution is given by
\begin{equation*}
\begin{split}
v(t) = \int_0^\infty f_{1,\alpha}^\alpha (t,s) u(s) ds, \quad t>0,
\end{split}
\end{equation*}
for each $x \in D(A^{\alpha})$, where $f_{1,\alpha}^\alpha$ is given
as in Corollary \ref{bounded-cor} and $u$ is the solution to the
Cauchy problem (\ref{ACP000}).

(d) Let $\beta \in (1,2]$ and $-A \in \mathcal{C}_\beta(0)$. The
Cauchy problem (\ref{ACP000}) is well-posed and its unique solution
is given by
\begin{equation*}
\begin{split}
u(t) = \int_0^\infty \varphi_{1/\beta} (t,s) v(s) ds, \quad t> 0,
\end{split}
\end{equation*}
for each $x \in D(A)$, where $v$ is the solution to the fractional
Cauchy problem
\begin{equation}\label{insert 2}
\begin{split}
&D_t^\beta v(t) = -A v(t), \quad t > 0,\\
&v(0)=x,\, v'(0)=0.
\end{split}
\end{equation}

(e) Let $\beta \in (1,2]$ and $-A \in \mathcal{C}_\beta(0)$. The
Cauchy problem
\begin{equation}\label{ACP1}
\begin{split}
&u'(t) = -A^{1/\beta} u(t),\quad t > 0,\\
&u(0)=x,
\end{split}
\end{equation}
is well-posed and its unique solution is given by
\begin{equation*}
\begin{split}
u(t) = \int_0^\infty f_{\beta,1}^{1/\beta} (t,s) v(s) ds, \quad t>
0,
\end{split}
\end{equation*}
for each $x \in D(A^{1/\beta})$, where $f_{\beta,1}^{1/\beta}$ is
given as in Corollary \ref{bounded-cor} and $v$ is the solution to
the fractional Cauchy problem (\ref{insert 2}).
\end{thm}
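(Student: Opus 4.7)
The plan is to prove each of (a)--(e) as a direct application of Corollary~\ref{bounded-cor}, combined with the standard correspondence between $\alpha$-times resolvent families and well-posedness of fractional Cauchy problems (as developed in \cite{Baj}). In each part, one assertion of Corollary~\ref{bounded-cor} identifies a generator together with an integral representation of its resolvent family (or $C_0$-semigroup); the orbit of that family through $x$ is then the unique solution, and the integral representation converts it into the advertised subordination formula.

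I would carry out (a) in detail as the model. Since $-A\in\mathcal{C}_1(0)$, Corollary~\ref{bounded-cor}(b) produces an $\alpha$-times resolvent family $S_\alpha$ for $-A$, given in the strong sense by $S_\alpha(t)x=\int_0^\infty\varphi_\alpha(t,s)\,S_1(s)x\,ds$. For $x\in D(A)$ the resolvent equation (\ref{resolvent equation}) shows immediately that $v(t):=S_\alpha(t)x$ is a strong solution of (\ref{FACP0}); uniqueness follows from Lemma~\ref{Laplace transform}, since any solution $v$ of (\ref{FACP0}) has Laplace transform $\widehat v(\lambda)=\lambda^{\alpha-1}(\lambda^\alpha+A)^{-1}x$, which determines $v$ uniquely. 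Recognizing $u(s)=S_1(s)x$ as the unique solution of (\ref{ACP000}) then yields the claimed representation.

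The remaining four items follow by the same recipe, each invoking a different part of Corollary~\ref{bounded-cor}. Part (b) uses Corollary~\ref{bounded-cor}(a) to see that $-A^\alpha$ generates a bounded analytic $C_0$-semigroup represented by $\int_0^\infty p_\alpha(t,s)\,S_1(s)\,ds$. Part (c) uses Corollary~\ref{bounded-cor}(c) to see that $-A^\alpha$ generates an analytic $\alpha$-times resolvent family represented by $\int_0^\infty f_{1,\alpha}^\alpha(t,s)\,S_1(s)\,ds$, for which one requires $x\in D(A^\alpha)$ so that $A^\alpha x\in X$. Part (d) is Corollary~\ref{bounded-cor}(b) with the roles of $\alpha$ and $\beta$ in that statement swapped: $-A\in\mathcal{C}_\beta(0)$ with $\beta\in(1,2]$ implies that $-A$ generates a $C_0$-semigroup with $T(t)x=\int_0^\infty\varphi_{1/\beta}(t,s)\,S_\beta(s)x\,ds$, where $v(s):=S_\beta(s)x$ is the unique solution of (\ref{insert 2}). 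Part (e) uses Corollary~\ref{bounded-cor}(d) to see that $-A^{1/\beta}$ generates a $C_0$-semigroup represented by $\int_0^\infty f_{\beta,1}^{1/\beta}(t,s)\,S_\beta(s)\,ds$. For (d) and (e) the second initial condition $v'(0)=0$ in (\ref{insert 2}) is automatic: iterating the resolvent equation as in (\ref{Salphan}) gives $\frac{d}{dt}S_\beta(t)x=(g_{\beta-1}*S_\beta)(t)Ax$ for $x\in D(A)$, and this vanishes at $t=0$ because $\beta>1$ makes $g_{\beta-1}$ integrable near the origin.

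The main obstacle is a piece of bookkeeping rather than real analysis: justifying Fubini's theorem when the subordination integral is interchanged with the action of the resolvent family on $x$, and symmetrically when one writes $S_\alpha(t)x=\int_0^\infty\varphi_\alpha(t,s)u(s)\,ds$ with $u(s)=S_1(s)x$. The absolute integrability of each of the kernels $\varphi_\alpha(t,\cdot)$, $p_\alpha(t,\cdot)$, $f_{1,\alpha}^\alpha(t,\cdot)$, $\varphi_{1/\beta}(t,\cdot)$ and $f_{\beta,1}^{1/\beta}(t,\cdot)$ needed for this is already guaranteed by the Mittag--Leffler asymptotic (\ref{Eestimate}) and by the representations established in Corollary~\ref{bounded-cor}. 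In effect, the theorem is a translation of Corollary~\ref{bounded-cor} from operator language into solution language.
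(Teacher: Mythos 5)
Your proposal is correct and takes essentially the same route the paper does: the paper gives no separate proof, presenting the theorem as a ``direct consequence of Corollary \ref{bounded-cor},'' and your part-by-part translation of that corollary into solution language---via the correspondence between generating an $\alpha$-times resolvent family and well-posedness of the associated fractional Cauchy problem from \cite{Baj}---is exactly the intended derivation, including the correct matching of each part of the theorem to the corresponding part of the corollary and the observation that $v'(0)=0$ is automatic for $S_\beta(t)x$ when $\beta>1$. The only minor caveat is that uniqueness is better attributed to the well-posedness theory of \cite{Baj} than to a bare Laplace-transform identity (a strong solution is not a priori Laplace transformable), but this does not affect the correctness of the argument.
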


\begin{rem}
(a) In Theorem \ref{general relation homogeneous} (a) and (c), if
$A$ generates an analytic $C_0$-semigroup, then the restriction on
$\alpha$ can be relaxed by using Proposition \ref{analytic-cor}.

(b) By using the generalized subordination principle in Theorem
\ref{main} and Proposition \ref{strong-solution} one can also
consider the  inhomogeneous fractional Cauchy problems.
\end{rem}

\begin{rem}\label{stochastic 1} The results in Theorem \ref{general relation
homogeneous} can be interpreted in terms of stochastic solutions.
Let $0 < \alpha < 1$ and $X$ be a Markov process with a semigroup
$T(t)f(x) = \mathbb{E}(f(X(t)))$ generated by $-A$ and let $E(t) :=
\inf\{x > 0: D(t) > t\}$ be the inverse or hitting time process of
the stable subordinator $D(t)$, independent of $X$, with
$\mathbb{E}(e^{-sD(t)}) = e^{-ts^\alpha}$. If $u$ is a solution to
the problem
\begin{equation}\label{aadje}
\begin{split}
u'(t) = -A u(t); \quad u(0) = f(x),
\end{split}
\end{equation}
then

(a) the problem
\begin{equation*}
\begin{split}
D_t^\alpha v(t) = -A v(t); \quad v(0) = f(x),
\end{split}
\end{equation*}
has a unique solution given by
\begin{equation*}
\begin{split}
v(t) = \mathbb{E}(f(X(E(t)))) = \int_0^\infty u(s) f_{E(t)}(s) ds,
\end{split}
\end{equation*}
where $f_{E(t)}(s)$ is the density of the inverse stable
subordinator of index $\alpha$ (see also Theorem 3.3 in
\cite{BWM2});

(b) the problem
\begin{equation*}
\begin{split}
W'(t) = -A^\alpha W(t); \quad W(0) = f(x),
\end{split}
\end{equation*}
has a unique solution given by
\begin{equation*}
\begin{split}
W(t) = \mathbb{E}(f(X(D(t)))) = \int_0^\infty u(s) f_{D(t)}(s) ds,
\end{split}
\end{equation*}
where $f_{D(t)}(s)$ is the density of the stable subordinator of
index $\alpha$;

(c) the problem
\begin{equation*}
\begin{split}
D_t^\alpha v(t) = -A^\alpha v(t); \quad v(0) = f(x),
\end{split}
\end{equation*}
has a unique solution given by
\begin{equation*}
\begin{split}
v(t) = & \mathbb{E}(f(X(D(E(t))))) = \int_0^\infty W(s) f_{E(t)}(s)
ds\\
= & \int_0^\infty \bigg( \int_0^\infty u(r) f_{D(s)} (r) dr \bigg)
f_{E(t)}(s) ds
\end{split}
\end{equation*}
with $W$ given in (b);

(d) if in addition that for some $\beta \in (1, 2]$ the fractional
Cauchy problem
$$
D^\beta V(t) = -A V(t);\quad  V(0)= f(x),\, V'(0)=0
$$
 is well-posed, then the solution of (\ref{aadje}), $u$, is subordinated to $V$ by
\begin{equation*}
u(t) =   \int_0^\infty V(s) h_{E(t)}(s)ds,
\end{equation*}
where $h_{E(t)}(s)$ is the density of the inverse stable
subordinator of index $1/\beta$;

(e) if the assumptions of (d) hold, then the solution to the Cauchy
problem
$$
v'(t) = -A^{1/\beta}v(t);\quad v(0)= f(x),
$$
is connected to $V$ by
$$
v(t) =\int_0^\infty u(s) h_{D(t)}(s) ds =\int_0^\infty \bigg(
\int_0^\infty V(r) h_{E(s)} (r) dr \bigg) h_{D(t)}(s) ds
$$
where $h_{E(t)}$ is as in (d) and $h_{D(t)}(s)$ is the density of
the stable subordinator of index $1/\beta$.
\end{rem}

We end this paper with two examples.
\begin{example}\label{example3}
Let $\rho>0$ and $m\in\mathbb{N}$. Consider the fractional
relaxation equation (cf. \cite{GM})
\begin{eqnarray}\label{f=0}
\begin{split}
&D_t^{1/m} u(t)=-\rho u(t),\quad t>0,\\
&u(0)=x.
\end{split}
\end{eqnarray}
The solution of (\ref{f=0}) is given by $u(t) = xE_{1/m}(-\rho
t^{1/m})$. By Proposition \ref{prop1/m}, $u(t)$ also solves
\begin{eqnarray}\label{1}
\begin{split}
&v'(t) = (-\rho)^m v(t) + \sum_{k=1}^{m-1}g_{\frac{k}{m}}(t) (-\rho)^k x,\quad t>0,\\
&v(0)=x.
\end{split}
\end{eqnarray}
Note that the solution of (\ref{1}) is unique. Therefore, the
problem (\ref{f=0}) is equivalent to the problem (\ref{1}).
\end{example}

\begin{example}\label{example4}

By Theorem \ref{general relation homogeneous}, the solution of the
fractional diffusion equation of order $0<\alpha \le 1$
\begin{equation}
\begin{split}
D_t^\alpha u(t,x) &= \Delta u(t,x),\quad t >0, \\
u(0,x)&= f_0(x)
\end{split}
\end{equation}
is given by $u(t,x) = \int_0^\infty \varphi_\alpha(t,s)
(T(s)f_0)(x)ds$, where $T$ is the Gaussian semigroup generated by
$\Delta$. Since
$$
T(s) f(x) = (k_s* f)(x)= (4\pi s)^{-n/2} \int_{\Real^n}
e^{-|x-y|^2/4s} f_0(y) dy,
$$
we have
$$
u(t,x) = \int_{\Real^n} \Big[\int_0^\infty \varphi_\alpha(t,s)(4\pi
s)^{-n/2}  e^{-|x-y|^2/4s }ds\Big] f_0(y) dy.
$$
See also \cite{SW}.


\end{example}


\vskip8pt

{\bf Acknowledgements}\hspace{0.25cm} The authors are grateful to
the referee for the valuable comments and suggestions, and
especially for the suggestion of giving the interpretation of our
results for stochastic solutions. Remark \ref{stochastic 1} is in
fact suggested by the referee.


\begin{thebibliography}{sssssss}

\bibitem{AZ} H. Allouba and W. Zheng, Brownian-time processes:
the PDE connection and the half-derivative generator, {\it Ann.
Prob.} {\bf 29} (2) (2001), 1780-1795.

\bibitem{ABHN} W. Arendt, C. Batty, M. Hieber, F. Neubrander,
Vector-valued Laplace Transforms and Cauchy Problems, Monographs in
Mathematics {\bf 96}, Birkh\"{a}user, Basel, 2001.

\bibitem{Baj} E. G. Bajlekova, Fractional Evolution Equations in Banach
Spaces, Ph.D. Thesis, Department of Mathematics, Eindhoven
University of Technology, 2001.

\bibitem{Bal} A. V. Balakrishnan, Fractional powers of closed operators and the semigroups generated by them,
{\it Pacific J. Math.} {\bf 10} (1960), 419-437.



\bibitem{BM} B. Baeumer and M. M. Meerschaert, Stochastic solutions for fractional Cauchy problems,
{\it Fractional Calculus Appl. Anal.} {\bf 4} (2001), 481-500.


\bibitem{BMN} B. Baeumer, M. M. Meerschaert and E. Nane, Brownian subordinators and fractional Cauchy problems,
{\it Trans. Amer. Math. Soc.} {\bf 361} (2009), 3915-3930.




\bibitem{BWM2} D. A. Benson, S. W. Wheatcraft and M. M. Meerschaert,
The fractional-order governing equation of L\'{e}vy motion, {\it
Water Resources Research} {\bf 36} (6) (2000), 1413-1424.




\bibitem{CL} C. Chen and M. Li, On fractional resolvent operator functions, {\it Semigroup
Forum} {\bf 80} (2010), 121-142.


\bibitem{Cio and Liz} I. Cioranescu and V. Keyantuo, On operator cosine functions in UMD spaces, {\it Semigroup
Forum} {\bf 63} (2001), 429-440.


\bibitem{Da} G. Da Prato and M. Iannelli, Linear integro-differential equations in
Banach space, {\it Rend. Sem. Mat. Univ. Padova} {\bf 62} (1980),
207-219.

\bibitem{De} R. D. DeBlassie, Iterated Brownian motion in an open set, {\it Ann.
Appl. Prob.} {\bf 14} (3) (2004), 1529-1558.

\bibitem{GM} R. Gorenflo and F. Mainardi, Fractional Calculus: Integral and Differential Equations of Fractional Order,
A. Carpinteri and F. Mainardi (Editors): Fractals and Fractional
Calculus in Continuum Mechanics, Springer-Verlag, Wien and New York
1997, 223-276.

\bibitem{GLM} R. Gorenflo, Y. Luchko and F. Mainardi, Analytical properties and applications of the Wright
function, {\it Fractional Calculus and Applied Analysis} {\bf 2}(4)
(1999), 383-414.





\bibitem{Ha} M. Haase, The Functional Calculus for Sectorial Operators, Operator Theory: Adcances and Applications {\bf 169},
Birkh\"{a}user-Verlag, Basel, 2006.

\bibitem{HZ} Y. Z. Huang and Q. Zheng, Regularization for ill-posed Cauchy
problems associated with generators of analytic semigroups, {\it J.
Differential Equations} {\bf 203} (2004), 38-54.

\bibitem{KaL} A. Karczewska and C. Lizama, Stochastic Volterra equations driven by cylindrical Wiener
process, {\it J. Evol. Equ.} {\bf 7} (2007), 373-386.

\bibitem{Ka} T. Kato, Note on fractional powers of linear operators, {\it Proc.
Japan Acad.} {\bf 36} (1960), 94-96.



\bibitem{Ke} V. Keyantuo, On analytic semigroups and Cosine operator functions in Banach spaces, {\it Studia Math.} {\bf
129} (2) (1998), 137-156.

\bibitem{KL} V. Keyantuo and C. Lizama,  On a connection between powers of operators and fractional Cauchy
problems, http://netlizama.usach.cl/Keyantuo-Lizama(AMPA)(2009).pdf.

\bibitem{KST} A. A. Kilbas, H. M. Srivastava and J. J. Trujillo, Theory and Applications of Fractional Differential Equations,
North-Holland Math. Stud. {\bf 204}, North-Holland, Amsterdam, 2006.

\bibitem{Ko} H. Komatsu, Fractional powers of operators, {\it Pac. J. Math.} {\bf
19} (1966), 285-346.



\bibitem{LL} F. B. Li and M. Li, On maximal regularity and semivariation of
$\alpha$-times resolvent families, submitted.


\bibitem{LZ} M. Li and Q. Zheng, On spectral inclusions and approximations of $\alpha$-times resolvent families,
{\it Semigroup Forum} {\bf 69} (2004), 356-368.

\bibitem{Liz1} C. Lizama, Regularized solutions for abstract Volterra
equations, {\it J. Math. Anal. Appl.} {\bf 243} (2000), 278-292.

\bibitem{Liz2} C. Lizama, On approximation and representation of
$k$-regularized resolvent families, {\it Integral Equ. Oper. Theory}
{\bf 41} (2001), 223-229.





\bibitem{MS} C. Mart\'{\i}nez and M. Sanz, The Theory of Fractional Powers of Operators, North-Holland
Math. Stud. {\bf 187}, North-Holland, Amsterdam, 2001.





\bibitem{MS2} M. M. Meerschaert and H. P. Scheffler, Limit
Distributions for Sum of Independent Random Vectors: Heavy Tails in
Theory and Practice, Wiley series in Probability and Statistics,
Wiley Interscience, New York, 2001.


\bibitem{Pru} J. Pr\"{u}ss, Evolutionary Integral Equations and Applications,
Birkh\"{a}user Verlag, Basel, 1993.

\bibitem{Po} I. Podlubny, Fractional Differential Equations,
Mathematics in Science and Engineering {\bf 198}, Academic Press,
San Diego, 1999.




\bibitem{SGM} E. Scalas, R. Gorenflo and F. Mainardi, Fractional calculus and continuous-time finance,
{\it Physica A} {\bf 284} (2000), 376-384.

\bibitem{SW} W.R. Schneider and W. Wyss, Fractional diffusion and
wave equations, {\it J. Math. Phys.} {\bf 30} (1989), 134-144.





\bibitem{Yo} K. Yosida, Fractional powers of infinitesimal generators and the analyticity of the semigroups generated by them,
{\it Proc. Japan Acad.} {\bf 36} (1960), 86-89.

\bibitem{Za} G. M. Zaslavsky, Fractional kinetic equations for
Hamiltonian chaos, Chaotic advection, tracer dynamics and turbulent
dispersion, {\it Phys. D} {\bf 76} (1994), 110-122.



\end{thebibliography}
\end{document}